\documentclass[12pt]{article}

\usepackage[latin1]{inputenc}
\usepackage[T1]{fontenc}

\usepackage{authblk} 

\usepackage{amsmath}
\usepackage{amsfonts}
\usepackage{amssymb}

\usepackage[normalem]{ulem} 

\usepackage{amsmath,xcolor,ulem}
\usepackage{amsfonts}
\usepackage{amssymb,todonotes}
\usepackage{amsthm}
\usepackage{graphicx}
\usepackage{marginnote}

\usepackage[colorlinks=true, allcolors=blue]{hyperref}

\usepackage[top=2.5cm,bottom=2.5cm,left=2.5cm,right=2.5cm]{geometry}
\usepackage{float}
\usepackage[algoruled]{algorithm2e}
\usepackage[font=footnotesize,width=\linewidth]{caption} 

\usepackage{graphicx}
\usepackage{amssymb}
\usepackage{amsthm}
\usepackage{amsmath}
\usepackage{lineno}
\usepackage{hyperref}
\usepackage{verbatim}

\newtheorem{theorem}{Theorem}[section]

\theoremstyle{definition}

\newtheorem{example}[theorem]{Example}

\theoremstyle{remark}
\newtheorem{remark}[theorem]{Remark}

\numberwithin{equation}{section}

\usepackage{graphicx, pdflscape}
\usepackage{amsmath, hyperref}
 \usepackage{amssymb} 
\usepackage{relsize, float, lineno}

 \usepackage{xcolor} 
\usepackage{subcaption}
\allowdisplaybreaks
\newcommand{\diff}{\mathrm{d}}
\DeclareMathOperator{\err}{err}
\DeclareMathOperator{\ROC}{ROC}

\title{A Mathematical Study of an SIS Epidemic Model: Global Asymptotic Stability Analysis and Construction of Nonstandard Numerical Schemes}

\author{Manh Tuan Hoang\footnote{Email(s): \href{mailto:tuanhm16@fe.edu.vn}{tuanhm16@fe.edu.vn}; \href{mailto:hmtuan01121990@gmail.com}{hmtuan01121990@gmail.com}}}

\affil{Department of Mathematics, FPT University, Hoa Lac Hi-Tech Park, \\ Km29 Thang Long Blvd, Hanoi, Viet Nam}
\begin{document}
\maketitle

\begin{abstract}
The aim of this work is to provide a rigorous mathematical analysis of a well-known SIS epidemic model with a saturating contact rate. First, we establish the global asymptotic stability (GAS) of the model's equilibria by employing a suitable Lyapunov function in combination with the Poincar\'e--Bendixson theorem and the Bendixson--Dulac criterion. The resulting GAS results improve upon previous findings for this SIS model and may also be applicable to its extensions with more general saturating contact rates.

Second, we construct families of first- and second-order nonstandard finite difference (NSFD) schemes that preserve the positivity and asymptotic stability properties of the SIS model for arbitrary step sizes. All these schemes are formulated within Mickens' framework; however, compared with their first-order counterparts, the second-order schemes employ a more elaborate construction that combines a weighted nonlocal approximation of the right-hand side with suitably renormalized denominator functions. The weighted nonlocal approximation guarantees dynamic consistency, whereas the denominator functions ensure second-order convergence.

Finally, we conduct numerical experiments to validate the theoretical results and demonstrate the advantages of the proposed  NSFD schemes. The numerical results are in good agreement with the theoretical results. The approach developed in this work is applicable not only to other epidemiological systems but also, more generally, to mathematical models arising in a wide range of real-world applications.
\end{abstract}

\begin{minipage}{0.9\linewidth}
 \footnotesize
\textbf{2020 Mathematics Subject Classification:} 37M05, 37N99, 65L05, 65Z05.\\
\medskip
\noindent
\textbf{Keywords:} 
SIS model, Global asymptotic stability, Nonstandard finite difference, Second-order, Dynamic consistency.
\end{minipage}
\section{Introduction}
Mathematical modeling and analysis in epidemiology have many useful real-world applications \cite{Allen, Brauer, Martcheva}. Building on the foundational works in mathematical epidemiology by Kermack and McKendrick \cite{Kermack1, Kermack2, Kermack3}, mathematical models have been developed to investigate many serious infectious diseases, such as influenza, hepatitis B, and COVID-19 and so on (see, e.g., \cite{Alexander, Casagrandi, Lacarbonara, Nowak, Tchoumi, Wang}).
The study of epidemiological models helps elucidate the mechanisms of disease transmission and spread and informs the development of disease prevention and control measures to protect public health. A recent review \cite{HoangMatthias} surveyed advances in the use of deterministic differential equations for studying infectious diseases, focusing on mathematical modeling, qualitative analysis, numerical methods, and practical applications. 
Recently, we have constructed dynamically consistent numerical schemes for epidemic models of influenza, HBV, and COVID-19 \cite{Hoang1, Hoang2, Hoang3, HoangMatthias1}, which are based on Mickens' nonstandard finite difference (NSFD) methodology \cite{Mickens1, Mickens2, Mickens3, Mickens4, Mickens5}. Furthermore, mathematical modeling and NSFD schemes for epidemiological models of malware and information spreading have been investigated in \cite{DangHoangJCAM, Hoang4, Hoang5}.

In \cite{Lan}, Lan et al. proposed a modified version of the classical SIS model, which used a generalized saturating contact rate function. The mathematical model is represented by
\begin{equation}\label{eq:1}
\begin{split}
\dfrac{\diff S}{\diff t} = F(S, I) := \Lambda - \dfrac{{\beta} S I}{h(N)}- \mu S + \gamma I,\\
\dfrac{\diff I}{\diff t} = G(S, I) := \dfrac{{\beta} S I}{h(N)}-(\mu+\delta+\gamma) I,
\end{split}
\end{equation}
where 
\begin{itemize}
\item $S(t)$ and $I(t)$ denote the numbers of susceptible and infectious individuals at time $t$, respectively;
\item $N(t)=S(t)+I(t)$ denotes the total population size;
\item $\Lambda$ and $\delta$ denote the recruitment and disease-induced death rates, respectively;
\item $\mu$ is the natural death rate, whereas $\gamma^{-1}$ represents the mean time to recovery in the absence of competing mortality;
\item $h(N)=1+bN+\sqrt{1+2bN}$ is an auxiliary function defining the saturating contact rate $C(N)= \frac{bN}{h(N)}$;
\item $\beta$ is the transmission coefficient, and $b>0$ is the saturation parameter.
\end{itemize}
In \cite{Lan}, the dynamical properties of the deterministic model were first analyzed; the model was then extended to a stochastic setting, and the resulting stochastic model was subsequently investigated. Hoang et al. \cite{Hoang6} considered a fractional-order version of \eqref{eq:1} in the context of the Caputo derivative and analyzed the proposed fractional model. In particular, the following results were established in \cite{Lan}:
\begin{itemize}
\item The basic reproduction number of \eqref{eq:1} is computed as 
\begin{equation*}
\mathcal{R}_0 = \dfrac{\Lambda \beta}{\mu(\mu+\delta+\gamma) h\left(\dfrac{\Lambda}{\mu}\right)}.
\end{equation*}
\item The model always admits a unique disease--free equilibrium (DFE), $E_0$, whereas a unique disease-endemic equilibrium (DEE) exists if and only if $\mathcal{R}_0 > 1$.
\item The DFE point is not only locally asymptotically stable but also globally asymptotically stable if $\mathcal{R}_0 < 1$; whereas the DEE point is locally asymptotically stable if $\mathcal{R}_0 > $1.
\end{itemize}

It is noteworthy that the global asymptotic stability (GAS) of the DEE has not yet been established theoretically. Nevertheless, numerical simulations reported in \cite{Lan} as well as well-known stability results for epidemiological models \cite{Allen, Brauer, Martcheva} suggest that the DEE may be not only locally asymptotically stable but also globally asymptotically stable. Moreover, the stability of the DEE characterizes the long-term persistence of the disease and is therefore epidemiologically important for understanding disease dynamics and developing effective control strategies.

Motivated by the above observations, our first objective is to establish the GAS of the DEE point. To this end, we employ the Poincar\'e--Bendixson theorem and the Bendixson--Dulac criterion \cite{Allen, Martcheva} with a suitable Dulac function to establish the GAS of the DEE. As expected, we conclude that the DEE is globally asymptotically stable whenever it exists, namely, when $\mathcal{R}_0>1$. In addition, we provide two alternative proofs of the GAS of the DFE using different approaches. The first employs a suitable Lyapunov function, whereas the second applies the result in \cite{Castillo-Chavez}, which relies on the role of the basic reproduction number $\mathcal{R}_0$ in stability analysis. Both proofs differ from the proof given in \cite{Lan}.

In addition to analyzing the GAS of the two equilibria, our second objective is to construct NSFD schemes for numerical simulation purposes. As is widely recognized, the main advantage of NSFD schemes over standard numerical schemes is their dynamic consistency; that is, they preserve essential dynamical properties of the differential equations for all finite step sizes \cite{Mickens1, Mickens2, Mickens3, Mickens4, Mickens5}. For this reason, Mickens' methodology has been extensively developed for the construction of dynamically consistent NSFD schemes. Comprehensive reviews of NSFD methods can be found in \cite{Patidar1, Patidar2}. More recently, Hoang and Ehrhardt provided a comprehensive introduction to NSFD methods and their applications in \cite{HoangMatthias}. However, a drawback, which may be regarded as a trade-off for dynamic consistency, is that most NSFD schemes are only first-order accurate (see, for example, \cite{Cresson}).
In recent years, the construction of higher-order NSFD schemes for differential equations has attracted considerable research attention (see, e.g., \cite{Alalhareth1, Alalhareth2, Alalhareth3, Conte, DangHoang, Hoang_NumAl, HoangMatthias1, HoangMatthias2, Kojouharov1, Takacs} and references therein).
In earlier studies, a class of second-order NSFD schemes for ODEs with polynomial right-hand sides was introduced in \cite{Chen-Charpentier}, whereas higher-order NSFD schemes based on extrapolation techniques and variable-step-size algorithms were developed for MSEIR and SEIR models in \cite{Martin-Vaquero1, Martin-Vaquero2}. In another work \cite{GParra}, NSFD schemes were combined with Richardson extrapolation to improve the accuracy of numerical solutions for several population models.
In Section \ref{Sec3}, we construct families of first- and second-order nonstandard finite difference (NSFD) schemes that preserve the positivity and asymptotic stability properties of the SIS model for arbitrary step sizes. All these schemes are formulated within Mickens' framework; however, the second-order NSFD schemes follow the approach developed in \cite{HoangMatthias2}.  Compared with their first-order counterparts, the second-order schemes employ a more elaborate construction that combines a weighted nonlocal approximation of the right-hand side with suitably renormalized denominator functions. The weighted nonlocal approximation guarantees dynamic consistency, whereas the denominator functions ensure second-order convergence.

Lastly, we conduct several numerical experiments to support and illustrate the theoretical findings. As expected, the numerical results are in good agreement with the theoretical analysis.

The rest of the paper is organized as follows:\\
A complete global asymptotic stability analysis of the SIS model is presented in Section \ref{Sec2}. The construction of first- and second-order NSFD schemes is presented in Section \ref{Sec3}. Numerical simulations are conducted in Section \ref{Sec4}. Finally, the last section provides concluding remarks and discussions.
\section{Complete Global Asymptotic Stability of the SIS Model}\label{Sec2}
The main aim of this section is to  analyze the GAS of the DEE point of \eqref{eq:1}. We recall from the analysis in \cite{Lan} that the DFE point is given by 
\begin{equation*}
E_0 = \left(S_0,\,I_0\right) = \left(\dfrac{\Lambda}{\mu},\,0\right),
\end{equation*}
whereas the DEE point $E_* = (S_*,\,I_*)$ is determined by
\begin{equation*}
S_* = \dfrac{\Lambda}{\mu} - \dfrac{(\delta + \mu)I_*}{\mu}, \quad I_* = \dfrac{\Lambda - \mu N_*}{\delta},
\end{equation*}
where $N_*$ is the unique positive solution belonging to $\left(\dfrac{\Lambda}{\mu + \delta},\,\dfrac{\Lambda}{\mu}\right)$ of the equation (see \cite[Theorem 2]{Lan})
\begin{equation*}
F(N)=\beta\left[N-\frac{\mu}{\delta}\left(\frac{A}{\mu}-N\right)\right] - (\mu+\delta+\gamma) h(N)  = 0.
\end{equation*}
Note that \eqref{eq:1} admits the set $\mathbb{R}_+^2$ as a positively invariant set. Hence, it follows from \eqref{eq:1} that
\begin{equation*}
\Lambda - (\mu + \delta)N \leq \dfrac{\diff N}{\diff t} = \Lambda - \mu N - \delta I \leq \Lambda - \mu N.
\end{equation*}
Using a comparison theorem for ODEs \cite{McNabb} yields
\begin{equation*}
\left(N(0) - \dfrac{\Lambda}{\mu + \delta}\right)e^{-(\mu + \delta)t} + \dfrac{\Lambda}{\mu + \delta} \leq N(t) \leq \left(N(0) - \dfrac{\Lambda}{\mu}\right)e^{-\mu t} + \dfrac{\Lambda}{\mu}, 
\end{equation*}
which implies that
\begin{equation*}
\begin{split}
&\liminf_{t \to \infty}N(t) \geq \dfrac{\Lambda}{\mu + \delta},\\
&\limsup_{t \to \infty}N(t) \leq \dfrac{\Lambda}{\mu}.
\end{split}
\end{equation*}
Thus, it is sufficient to investigate the dynamics of \eqref{eq:1} over a feasible region defined by
\begin{equation*}
\Omega = \left\{(S, I)|S, I \geq 0,\, \dfrac{\Lambda}{\mu + \delta} \leq S + I \leq \dfrac{\Lambda}{\mu}\right\}.
\end{equation*}
To investigate the GAS of the DEE point, we need the following auxiliary result.
\begin{theorem}\label{Theorem1}
If $S(0) \geq 0$ and $I(0) > 0$, then $S(t) > 0$ and $I(t) > 0$ for all $t > 0$.
\end{theorem}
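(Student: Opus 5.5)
We prove the two claims separately, first for $I$ and then for $S$, working on the maximal interval of existence. Since $\mathbb{R}_+^2$ is positively invariant and $N$ is bounded there, solutions exist for all $t \ge 0$. Along a fixed solution we also have $N(t) = S(t)+I(t) \ge I(t) > 0$ as long as $I$ stays positive, so $h(N(t)) \ge 2$ is well defined and continuous.

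\textbf{Step 1: positivity of $I$.} Write the second equation of \eqref{eq:1} as the linear equation
\begin{equation*}
\dfrac{\diff I}{\diff t} = a(t)\, I, \qquad a(t) := \dfrac{\beta S(t)}{h(N(t))} - (\mu+\delta+\gamma),
\end{equation*}
where $a$ is continuous along the solution. Integrating gives
\begin{equation*}
I(t) = I(0)\exp\left(\int_0^t a(s)\,\diff s\right).
\end{equation*}
Hence $I(0)>0$ implies $I(t)>0$ for all $t \ge 0$. Positivity of $I$ in turn guarantees $N(t)>0$, which justifies the definition of $a$.

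\textbf{Step 2: positivity of $S$ for $t>0$.} This is the more delicate step, because $S(0)=0$ is allowed. Suppose first that $S(0)=0$. Then
\begin{equation*}
S'(0) = \Lambda + \gamma I(0) > 0,
\end{equation*}
so $S(t)>0$ on some interval $(0,\varepsilon)$. If $S(0)>0$, continuity gives $S>0$ near $0$ as well.

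Now argue by contradiction. Suppose $S$ vanishes somewhere after that initial interval, and let $t_1>0$ be the first time with $S(t_1)=0$. Then $S>0$ on $(0,t_1)$, which forces $S'(t_1)\le 0$. On the other hand, from the first equation of \eqref{eq:1},
\begin{equation*}
S'(t_1) = \Lambda + \gamma I(t_1) > 0,
\end{equation*}
using Step 1. This is a contradiction, so $S(t)>0$ for all $t>0$.

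As an alternative to the contradiction argument, one can use the variation-of-constants bound
\begin{equation*}
S(t) = e^{-\int_0^t c}\, S(0) + \int_0^t e^{-\int_s^t c}\,\bigl(\Lambda + \gamma I(s)\bigr)\,\diff s > 0,
\end{equation*}
with $c = \beta I/h(N) + \mu$. The only point needing care is the case $S(0)=0$, which the first-time argument (or this integral formula) handles directly.
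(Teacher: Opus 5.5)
Your proof is correct and follows the paper's argument: the integrating-factor formula gives positivity of $I$, and a first-zero contradiction for $S$ uses $S'=\Lambda+\gamma I>0$ at any zero of $S$, with the case $S(0)=0$ handled by $S'(0)>0$. Your left-difference-quotient sign argument at $t_1$ is slightly leaner than the paper's appeal to continuity of $S'$, and your variation-of-constants formula is a valid alternative that covers $S(0)=0$ and $S(0)>0$ at once.
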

\begin{proof}
First, the second equation of \eqref{eq:1} implies that
\begin{equation*}
I(t) = I(0)e^{\mathlarger{\mathlarger{\int}}_0^t\left[\dfrac{{\beta} S(\tau)}{h(N(\tau))}-(\mu+\delta+\gamma)\right]\diff \tau}.
\end{equation*}
Consequently, $I(t) > 0$ for $t > 0$ whenever $I(0) > 0$, and $I(t) = 0$ for $t > 0$ if $I(0) = 0$.

Assume that $S(0) > 0$. We will show that $S(t) > 0$ for $t > 0$. Suppose, to the contrary, that there exists a first time $t_0 > 0$ such that $S(t_0)=0$. We define $t_* = \min\{t|S(t) = 0\}$. Then, 
\begin{equation}\label{eq:3}
S(t) > 0, \quad \forall 0 < t < t_*.
\end{equation}
At $t = t_*$, we have
\begin{equation}\label{eq:4}
\dfrac{\diff S}{\diff t}\bigg|_{t = t_*} = \Lambda - \dfrac{{\beta} S(t_*) I(t_*)}{h(N(t_*))}- \mu S(t_*) + \gamma I(t_*) = \Lambda + \gamma I(t_*) > 0.
\end{equation}
From the continuity of $\dfrac{\diff S}{\diff t}$, there exists $\epsilon > 0$ such that
\begin{equation*}
\dfrac{\diff S(t)}{\diff t} > 0, \quad t \in (t_* - \epsilon,\,t_* +\epsilon),
\end{equation*}
which implies that $S(t) < S(t_*) = 0$ for $t_* - \epsilon < t < t_*$. This contradicts \eqref{eq:3}. 

If $S(0) = 0$, then it follows from \eqref{eq:4} that there exists $t_0 > 0$ such that $S(t_0) > 0$. By the similar arguments, we conclude that $S(t) > 0$ for $t > 0$. Therefore, the desired conclusion follows. The proof is complete.
\end{proof}
The following theorem can be considered as the main result of this section.
\begin{theorem}[Global asymptotic stability of the DEE point]\label{Theorem2}
The DEE point of the SIS model \eqref{eq:1} is globally asymptotically stable whenever it exists and $I(0) > 0$.
\end{theorem}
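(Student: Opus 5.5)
Since $\mathcal{R}_0>1$ whenever the DEE exists, I will use the Poincar\'e--Bendixson theorem on the compact, positively invariant region $\Omega$, combined with the Bendixson--Dulac criterion to exclude periodic orbits. Every trajectory with $I(0)>0$ eventually lies in $\Omega$ (up to an arbitrarily small neighbourhood), and by Theorem~\ref{Theorem1} stays in the open quadrant. Its $\omega$-limit set is therefore a nonempty, compact, connected, invariant subset of $\Omega$. Poincar\'e--Bendixson then says it is an equilibrium, a periodic orbit, or a cycle of equilibria joined by orbits. The equilibria in $\Omega$ are only $E_0$ and $E_*$, and $E_*$ is locally asymptotically stable by \cite{Lan}. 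So the work reduces to two tasks: (i) rule out periodic orbits and homoclinic/heteroclinic cycles in the interior, and (ii) show that $E_0$ cannot be the $\omega$-limit of an interior trajectory when $\mathcal{R}_0>1$.

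For (i), I take the Dulac function $D(S,I)=1/I$ on $\{S>0, I>0\}$. Then
\[
\frac{\partial (DF)}{\partial S}+\frac{\partial (DG)}{\partial I}
= -\frac{\beta}{h(N)}+\frac{\beta S h'(N)}{h(N)^2}-\frac{\mu}{I}-\frac{\beta S h'(N)}{h(N)^2}
= -\frac{\beta}{h(N)}-\frac{\mu}{I}<0,
\]
where I use that $\partial_S h(N)=\partial_I h(N)=h'(N)$; the $h'$ terms cancel exactly and the $\gamma I/I$ term is constant in $S$. The divergence is strictly negative on the simply connected open quadrant. By Bendixson--Dulac there is no closed orbit there, and no cycle of orbits either, since the Dulac criterion excludes any closed piecewise-smooth invariant curve bounding a region. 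Closed orbits cannot touch the boundary $I=0$, because $\{I=0\}$ is invariant and interior orbits never reach it.

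For (ii), which I expect to be the main obstacle, I will show $E_0$ is a saddle for $\mathcal{R}_0>1$ whose stable manifold is the invariant axis $\{I=0\}$. The Jacobian at $E_0$ is upper triangular with eigenvalues $-\mu$ and $\beta\Lambda/(\mu h(\Lambda/\mu))-(\mu+\delta+\gamma)=(\mu+\delta+\gamma)(\mathcal{R}_0-1)>0$. The stable manifold is one-dimensional and, since the $S$-axis is invariant and tangent to the stable eigendirection, it coincides with the $S$-axis near $E_0$. Hence no interior trajectory converges to $E_0$.

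It remains to exclude an $\omega$-limit set containing $E_0$ together with connecting orbits. By the Butler--McGehee lemma, if $E_0\in\omega(x)$ but $\omega(x)\neq\{E_0\}$, then $\omega(x)$ contains a point of the stable manifold of $E_0$ other than $E_0$, i.e.\ a point on the $S$-axis. Then it contains that point's entire orbit, which runs along the $S$-axis toward $E_0$ and, backward, leaves $\Omega$ (or is unbounded). This contradicts compactness of the invariant $\omega$-limit set inside $\Omega$. Alternatively, I can use uniform persistence: $\liminf I(t)>0$ follows from the acyclicity of $\{E_0\}$ on the boundary and the instability of $E_0$ transversal to it. Hence $\omega(x)=\{E_*\}$. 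Together with the local asymptotic stability of $E_*$, this gives global asymptotic stability for all initial data with $I(0)>0$.
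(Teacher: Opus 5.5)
Your proposal is correct and follows essentially the same route as the paper: the Dulac function $1/I$ (giving divergence $-\beta/h(N)-\mu/I<0$), Poincar\'e--Bendixson on $\Omega$, exclusion of the DFE from the $\omega$-limit set, and then local asymptotic stability of $E_*$. The differences are only of coordinates and detail. You work in $(S,I)$ rather than the paper's $(I,N)$ variables, which gives the same divergence because the change of variables is linear with unit Jacobian determinant in absolute value. You also actually carry out the exclusion of $E_0$ via its saddle structure and the Butler--McGehee lemma, which the paper only delegates to ``repeating the arguments'' of Martcheva's Theorem 3.8.
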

\begin{proof}
First, we rewrite \eqref{eq:1} in the form
\begin{equation}\label{eq:1new}
\begin{split}
&\dfrac{\diff I}{\diff t} = f(I, N) := \dfrac{\beta(N - I)I}{h(N)} - (\mu+\delta+\gamma) I,\\
&\dfrac{\diff N}{\diff t} = g(I, N) := \Lambda - \mu N - \delta I.
\end{split}
\end{equation}
Then, the DEE point is transformed to $\widehat{E}_* = (I_*,\,N_*)$.

As a direct consequence of Theorem \ref{Theorem1}, the interior of $\mathbb{R}_2^+$ forms a positively invariant set of \eqref{eq:1new}. Let us consider a candidate Dulac function given by
\begin{equation*}
D(I, N) = \dfrac{1}{I}, \quad I, N > 0.
\end{equation*}
Then, the function $D$ satisfies
\begin{equation*}
\dfrac{\partial(Df)}{\partial I} + \dfrac{\partial(Dg)}{\partial N} = \dfrac{\partial}{\partial I}\left[\dfrac{\beta(N - I)}{h(N)} - (\mu + \delta + \gamma)\right] + \dfrac{\partial}{\partial N}\left(\dfrac{\Lambda}{I} - \dfrac{\mu N}{I} - \delta\right) = -\dfrac{\beta}{h(N)} - \dfrac{\mu}{I} < 0
\end{equation*}
for all $I, N > 0$. Using the Dulac--Bendixson criterion (\cite[Theorem 3.6]{Martcheva}) implies that the system \eqref{eq:1new} has no periodic orbits or graphics in the first quadrant.

Now, by repeating the arguments used in the proof of \cite[Theorem 3.8]{Martcheva}, we conclude that the DFE point does not belong to the omega limit set of $\left(I(0),\, N(0)\right)$. Therefore, $\lim _{t \to \infty}\left(I(t),\, N(t)\right) = \widehat{E}_*$. Combining this with the local asymptotic stability of $\widehat{E}_*$, we obtain its GAS, and the proof is complete.
\end{proof}

Before concluding this section, we provide two alternative proofs of the GAS of the DFE using different approaches. The first employs a suitable Lyapunov function, whereas the second applies the result in \cite{Castillo-Chavez}, which relies on the role of the basic reproduction number $\mathcal{R}_0$ in stability analysis. Both proofs differ from the proof given in \cite{Lan}.
\begin{theorem}[GAS analysis of the DFE point via a Lyapunov function]\label{Theorem3}
The DFE point of \eqref{eq:1} is globally asymptotically stable if $\mathcal{R}_0 < 1$.
\end{theorem}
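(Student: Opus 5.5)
The plan is to use the simplest Lyapunov candidate, $V(S,I) = I$, on the feasible region $\Omega$, and then finish with LaSalle's invariance principle. The argument relies on two facts: trajectories of \eqref{eq:1} eventually enter (or approach) $\Omega$, so it suffices to work there, and the incidence term is controlled by $\mathcal{R}_0$ on $\Omega$.

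\textbf{Bound on the incidence coefficient.} The key observation is that $h(N)=1+bN+\sqrt{1+2bN}$ is increasing. We also need the ratio $N/h(N)$ to be increasing, which holds because $C(N)=bN/h(N)$ is a saturating (increasing) contact rate. This can be checked by differentiating: $h(N)-Nh'(N) = 1+\sqrt{1+2bN} - bN/\sqrt{1+2bN} > 0$, since $\sqrt{1+2bN}\cdot(1+\sqrt{1+2bN}) > bN$. On $\Omega$ we have $S \le N \le \Lambda/\mu$, so
\[
\frac{\beta S}{h(N)} \le \frac{\beta N}{h(N)} \le \frac{\beta \Lambda/\mu}{h(\Lambda/\mu)} = \mathcal{R}_0(\mu+\delta+\gamma).
\]

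\textbf{Lyapunov derivative.} Along trajectories in $\Omega$ this gives
\[
\frac{\diff V}{\diff t} = I\left[\frac{\beta S}{h(N)}-(\mu+\delta+\gamma)\right] \le (\mu+\delta+\gamma)(\mathcal{R}_0-1)\,I \le 0 .
\]
If $\mathcal{R}_0<1$, this in fact yields $I(t)\le I(0)e^{-(\mu+\delta+\gamma)(1-\mathcal{R}_0)t}\to 0$ directly. The largest invariant subset of $\{\dot V=0\}=\{I=0\}\cap\Omega$ is contained in the line $I=0$. There the dynamics reduce to $\dot S=\Lambda-\mu S$, so the only invariant set within $\Omega$ is $\{E_0\}$, and LaSalle's principle gives attractivity. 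Equivalently, once $I\to0$ exponentially, the asymptotically autonomous equation $\dot S = \Lambda-\mu S + O(e^{-ct})$ gives $S\to\Lambda/\mu$ by a comparison/variation-of-constants argument. Combined with the local asymptotic stability of $E_0$ (from \cite{Lan}, or directly from the Jacobian with eigenvalues $-\mu$ and $(\mu+\delta+\gamma)(\mathcal{R}_0-1)$), this yields GAS.

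\textbf{Main obstacle.} The delicate point is that the bound $N\le\Lambda/\mu$ holds on $\Omega$ but only asymptotically for general initial data. I would handle it as follows. If $N(0)\le \Lambda/\mu$, then $\Omega$-type invariance ($N(t)\le\Lambda/\mu$ for all $t$, from $\dot N\le \Lambda-\mu N$) makes the estimate valid for all $t\ge0$. Otherwise, for any $\varepsilon>0$, $N(t)\le\Lambda/\mu+\varepsilon$ for $t\ge T_\varepsilon$. Continuity of $N/h(N)$ then gives $\beta S/h(N)\le (\mu+\delta+\gamma)\mathcal{R}_0+\eta(\varepsilon)$ with $\eta(\varepsilon)\to0$. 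Choosing $\varepsilon$ small so that $\mathcal{R}_0(\mu+\delta+\gamma)+\eta<\mu+\delta+\gamma$ still gives exponential decay of $I$ after $T_\varepsilon$, and the rest of the argument is unchanged.
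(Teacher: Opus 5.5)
Your proof is correct, but it takes a different route from the paper's. The paper works in the $(I,N)$ variables with the \emph{strict} Lyapunov function $V(I,N)=l_1 I+\tfrac12 l_2(N-N_0)^2$. The quadratic term contributes $-l_2\mu(N-N_0)^2$ to $\dot V$, at the price of a cross term $l_2\delta N_0 I$. That cross term is absorbed by choosing $0<l_2<l_1(\mu+\delta+\gamma)(1-\mathcal{R}_0)\mu/(\delta\Lambda)$, so $\dot V<0$ on $\Omega\setminus\{\widehat{E}_0\}$ and Lyapunov's direct theorem gives stability and attractivity at once.

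Your $V=I$ is only positive semidefinite, so you need two ingredients the paper avoids. You use LaSalle's principle (or the asymptotically autonomous limit) to get $S\to\Lambda/\mu$, and the Jacobian at $E_0$ to get Lyapunov stability. In return you get an explicit exponential decay rate $(\mu+\delta+\gamma)(1-\mathcal{R}_0)$ for $I$, with no weights to tune.

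Your version is also more complete on two points the paper glosses over. First, you actually verify that $N/h(N)$ is increasing, which the paper only asserts. Second, your $\varepsilon$-argument covers initial data with $N(0)>\Lambda/\mu$, which need never enter $\Omega$ in finite time (for example when $I(0)=0$); the paper simply restricts attention to $\Omega$.
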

\begin{proof}
In order show the GAS of the DEE point $E_0$ of \eqref{eq:1}, we consider \eqref{eq:1new} in place of \eqref{eq:1}. Note that the DFE point is now transformed to $\widehat{E}_0 = (I_0,\,N_0) = \left(0,\,\frac{\Lambda}{\mu}\right)$. Consider a Lyapunov function candidate defined by
\begin{equation*}
V(I, N) =l_1 I + \dfrac{1}{2}l_2(N - N_0)^2.
\end{equation*}
The time derivative of $V$ along with the solutions of \eqref{eq:1new} satisfies
\begin{equation*}
\begin{split}
\dfrac{\diff V}{\diff t} &= \dfrac{\diff V}{\diff I}\dfrac{\diff I}{\diff t} + \dfrac{\diff V}{\diff N}\dfrac{\diff N}{\diff t}\\
&= l_1\left[\dfrac{\beta(N - I)I}{h(N)} - (\mu+\delta+\gamma) I\right] + l_2(N - N_0)(\Lambda - \mu N - \delta I)\\
&= \left[l_1 \dfrac{\beta N}{h(N)} - l_1(\mu+\delta+\gamma) + l_2\delta N_0\right]I - l_2\mu(N - N_0)^2 -l_1\dfrac{\beta I^2}{h(N)} - l_2\delta NI\\
&\leq \left[l_1 \dfrac{\beta N}{h(N)} - l_1(\mu+\delta+\gamma) + l_2\delta N_0\right]I - l_2\mu(N - N_0)^2.
\end{split}
\end{equation*}
Since the function $\frac{N}{h(N)}$ is increasing for $N > 0$, we have the estimate on $\Omega$
\begin{equation*}
\dfrac{N}{h(N)} \leq \dfrac{\Lambda/\mu}{h(\Lambda/\mu)},
\end{equation*}
which implies that
\begin{equation}\label{eq:6a}
\dfrac{\diff V}{\diff t} \leq \left[l_1 \dfrac{\beta \Lambda/\mu}{h(\Lambda/\mu)} - l_1(\mu+\delta+\gamma) + l_2\delta \dfrac{\Lambda}{\mu}\right]I - \Lambda(N - N_0)^2 := L_1I - \mu(N - N_0)^2,
\end{equation}
where
\begin{equation*}
L_1 = l_1 \dfrac{\beta \Lambda/\mu}{h(\Lambda/\mu)} - l_1(\mu+\delta+\gamma) + l_2\delta \dfrac{\Lambda}{\mu}.
\end{equation*}
Note that $L_1$ can be rewritten in the form
\begin{equation*}
L_1 = l_1(\mu + \delta + \gamma)(\mathcal{R}_0 - 1) + l_2\delta\dfrac{\Lambda}{\mu}.
\end{equation*}
Since $\mathcal{R}_0<1$, one can always choose $l_1$ and $l_2$ such that
\begin{equation*}
0 < l_2 < l_1(\mu + \delta + \gamma)(1 - \mathcal{R}_0)\dfrac{\mu}{\delta\Lambda}.
\end{equation*}
For such a choice of $l_1$ and $l_2$, we have $L_1 < 0$. From \eqref{eq:6a}, we conclude that $\frac{\diff V}{\diff t} \leq 0$ for all $(I, N) \in \Omega$ with equality if and only if $(I,\,N) = \widehat{E}_0$. Using the Lyapunov stability theorem \cite{Khalil, Stuart}, the GAS of $\widehat{E}_0$ is established. The proof is complete.
\end{proof}
In the following theorem, we apply the results developed in \cite{Castillo-Chavez} to show the GAS of the DFE point.
\begin{theorem}[GAS analysis of the DFE point via the characteristic of $\mathcal{R}_0$]\label{Theorem3}
The DFE point of \eqref{eq:1} is globally asymptotically stable if $\mathcal{R}_0 < 1$.
\end{theorem}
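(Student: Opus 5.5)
We apply the Castillo-Chavez--Feng--Huang criterion from \cite{Castillo-Chavez}. Write the system as
\begin{equation*}
\frac{\diff X}{\diff t} = F(X, Z), \qquad \frac{\diff Z}{\diff t} = G(X, Z), \quad G(X, 0) = 0,
\end{equation*}
where $X$ collects the uninfected compartments and $Z$ the infected ones. The DFE is $U_0 = (X^*, 0)$ and is locally asymptotically stable when $\mathcal{R}_0 < 1$. The criterion asks for two conditions:
\begin{itemize}
\item[(H1)] for $\frac{\diff X}{\diff t} = F(X, 0)$, the point $X^*$ is globally asymptotically stable;
\item[(H2)] $G(X, Z) = AZ - \widehat{G}(X, Z)$ with $\widehat{G}(X, Z) \geq 0$ on the feasible region, where $A = D_Z G(X^*, 0)$ is an M-matrix (nonnegative off-diagonal entries).
\end{itemize}
Then $U_0$ is GAS. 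We work on $\Omega$, which attracts all solutions by the comparison argument above.

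We take $X = S$ and $Z = I$. Working with $(N, I)$ as in \eqref{eq:1new}, with $X = N$, would be equivalent.

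\textbf{Step 1: (H1).} Setting $I = 0$ in \eqref{eq:1} gives $\frac{\diff S}{\diff t} = \Lambda - \mu S$. Its solution is $S(t) = \Lambda/\mu + (S(0) - \Lambda/\mu)e^{-\mu t}$, which converges to $S_0 = \Lambda/\mu$ for every initial value. So (H1) holds.

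\textbf{Step 2: (H2).} We have $A = \partial G/\partial I$ at $(\Lambda/\mu, 0)$, namely
\begin{equation*}
A = \frac{\beta \Lambda/\mu}{h(\Lambda/\mu)} - (\mu+\delta+\gamma) = (\mu+\delta+\gamma)(\mathcal{R}_0 - 1).
\end{equation*}
This is a $1\times 1$ matrix, so the M-matrix condition holds trivially. Then
\begin{equation*}
\widehat{G}(S, I) = AI - G(S, I) = \beta I\left[\frac{\Lambda/\mu}{h(\Lambda/\mu)} - \frac{S}{h(S+I)}\right].
\end{equation*}

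\textbf{The main obstacle} is showing $\widehat{G} \geq 0$ on $\Omega$, i.e.
\begin{equation*}
\frac{S}{h(S+I)} \leq \frac{\Lambda/\mu}{h(\Lambda/\mu)}.
\end{equation*}
The plan is to chain two inequalities:
\begin{equation*}
\frac{S}{h(N)} \leq \frac{N}{h(N)} \leq \frac{\Lambda/\mu}{h(\Lambda/\mu)}.
\end{equation*}
The first holds because $S \leq N$ and $h > 0$. The second uses that $N \mapsto N/h(N)$ is increasing, which was already used in the proof of the Lyapunov-based theorem, together with $N \leq \Lambda/\mu$ on $\Omega$.

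I would verify the monotonicity explicitly. Setting $u = \sqrt{1+2bN}$ gives $h = (1+u)^2/2$ and $bN = (u^2-1)/2$, so
\begin{equation*}
\frac{bN}{h(N)} = \frac{u-1}{u+1},
\end{equation*}
which is increasing in $u$, hence in $N$.

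Since $\Omega$ is positively invariant and globally attracting, the theorem in \cite{Castillo-Chavez} then yields global asymptotic stability of $E_0$ whenever $\mathcal{R}_0 < 1$. The only care needed is that $\widehat{G} \geq 0$ holds on $\Omega$ rather than on all of $\mathbb{R}_+^2$. This is standard: first restrict to the attracting invariant set $\Omega$, then handle orbits starting outside it by the limsup bound on $N$.
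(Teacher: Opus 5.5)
Your proposal is correct and is essentially the paper's proof. The paper applies the same Castillo-Chavez--Feng--Huang criterion, only with $X=N$ (from \eqref{eq:1new}) instead of $X=S$; since $X-Z=S$, its $\widehat{G}(X,Z)=\frac{\beta Z^2}{h(X)}+\beta Z\bigl(\frac{X^*}{h(X^*)}-\frac{X}{h(X)}\bigr)$ is exactly your $\beta I\bigl[\frac{\Lambda/\mu}{h(\Lambda/\mu)}-\frac{S}{h(S+I)}\bigr]$, and its two terms correspond to the two links of your chain of inequalities.

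Your explicit check that $bN/h(N)=(u-1)/(u+1)$ is increasing fills in a step the paper only asserts. One correction to your last step: orbits with $N(0)>\Lambda/\mu$ need not ever enter $\Omega$ (for instance when $I(0)=0$), so you cannot literally restrict to $\Omega$. Close the outside case instead by using the strict margin $\mathcal{R}_0<1$ on the slightly enlarged region $\{N\le \Lambda/\mu+\epsilon\}$, which orbits do eventually enter by the $\limsup$ bound on $N$.
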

\begin{proof}
First, we rewrite \eqref{eq:1new} in the form
\begin{equation*}
\begin{split}
&\dfrac{\diff X}{\diff t} = F(X, Z),\\
&\dfrac{\diff Z}{\diff t} = G(X, Z),
\end{split}
\end{equation*}
where $X = N$, $Z = I$ and
\begin{equation*}
\begin{split}
F(X, Z) &= \Lambda - \mu X - \delta Z,\\
G(X, Z) &= \dfrac{\beta(X - Z)Z}{h(X)} - (\mu+\delta+\gamma) Z.
\end{split}
\end{equation*}
Then, the dynamical system $\frac{\diff X}{\diff t} = F(X, 0)$ is reduced to
\begin{equation*}
\dfrac{\diff X}{\diff t} = \Lambda - \mu X.
\end{equation*}
This system has a unique equilibrium point $X^* = \frac{\Lambda}{\mu}$ and it is easily verified that it is globally asymptotically stable. Hence, the condition $(H1)$ in \cite{Castillo-Chavez} is satisfied.

On the other hand, we represent $G(X, Z)$ in the form
\begin{equation*}
G(X, Z) = AZ - \widehat{G}(X, Z),
\end{equation*}
where
\begin{equation*}
\begin{split}
&A := D_{Z}G(X^*, 0) = \dfrac{\beta X^*}{h(X^*)} - (\mu + \delta + \gamma),\\
&\widehat{G}(X, Z) = \dfrac{\beta Z^2}{h(X)} + \beta Z\left(\dfrac{X^*}{h(X^*)} - \dfrac{X}{h(X)}\right).
\end{split} 
\end{equation*}
If $\mathcal{R}_0 < 1$, then on the set $\Omega$ we have
\begin{equation*}
\begin{split}
&A = \dfrac{\beta X^*}{h(X^*)} - (\mu + \delta + \gamma) = \dfrac{\beta {(\Lambda}/{\mu})}{h\left({\Lambda}/{\mu}\right)} - (\mu + \delta + \gamma) < 0,\\
&\widehat{G}(X, Z) \geq 0.
\end{split}
\end{equation*}
which implies that the condition (H2) in \cite{Castillo-Chavez} holds.

Applying the global-stability result for the DFE in \cite{Castillo-Chavez} yields the GAS of the DFE point of \eqref{eq:1new}. The proof is complete.
\end{proof}
\section{Construction of Dynamically Consistent NSFD Schemes}\label{Sec3}
In this section, we construct first- and second-order NSFD schemes for the SIS model under consideration. 
\subsection{Construction of First-Order NSFD Schemes}
This section is devoted to the construction of first-order NSFD schemes. To this end, we first consider \eqref{eq:1} on a finite interval $[0,T]$ and partition this interval using a uniform grid:
\begin{equation*}
0 = t_0 < t_1 < \ldots < T_N = T,
\end{equation*}
where $t_k = k\Delta t (0 = 1, 2, \ldots, N)$ with $\Delta t = \dfrac{T}{N}$ being the step size.

First, we apply Mickens' methodology \cite{Mickens1, Mickens2, Mickens3, Mickens4, Mickens5} to construct a family of first-order NSFD schemes. Following this approach, the first derivatives are discretized by 
\begin{equation}\label{eq:NSFD1}
\begin{split}
&\dfrac{\diff S}{\diff t}\bigg|_{t = t_k} \approx \dfrac{S_{k + 1} - S_k}{\phi(\Delta t)},\\
&\dfrac{\diff I}{\diff t}\bigg|_{t = t_k} \approx \dfrac{I_{k + 1} - I_k}{\phi(\Delta t)},
\end{split}
\end{equation}
where $S_k$ and $I_k$ ($k = 1, 2, \ldots, N$) are the intended approximations for $S(t_k)$ and $I(t_k)$, respectively; $\phi(\Delta t) = \Delta t + \mathcal{O}(\Delta t^2)$ as $\Delta t \to 0$ is called a denominator function. Meanwhile, the right-hand side at $t = t_k$ is approximated by
\begin{equation}\label{eq:NSFD2}
\begin{split}
&\Lambda - \dfrac{{\beta} S(t_k) I(t_k)}{h(N(t_k))}- \mu S(t_k) + \gamma I(t_k) \approx \Lambda - \dfrac{{\beta} S_{k + 1} I_k}{h(N_k)}- \mu S_{k + 1} + \gamma I_k,\\
&\dfrac{{\beta} S(t_k) I(t_k)}{h(N(t_k))} - (\mu+\delta+\gamma) I(t_k) \approx \dfrac{{\beta} S_{k+1} I_k}{h(N_k)} - (\mu+\delta) I_{k + 1} - \gamma I_k.
\end{split}
\end{equation}
Combining \eqref{eq:NSFD1} and \eqref{eq:NSFD2} leads to the family of NSFD schemes
\begin{equation}\label{eq:NSFD1st}
\begin{split}
&\dfrac{S_{k + 1} - S_k}{\phi(\Delta t)} = \Lambda - \dfrac{{\beta} S_{k + 1} I_k}{h(N_k)}- \mu S_{k + 1} + \gamma I_k,\\
&\dfrac{I_{k + 1} - I_k}{\phi(\Delta t)} = \dfrac{{\beta} S_{k+1} I_k}{h(N_k)} - (\mu+\delta) I_{k + 1} - \gamma I_k.
\end{split}
\end{equation}
Next, we analyze the positivity of the approximations generated by \eqref{eq:NSFD1st} and determine the equilibrium points of the NSFD schemes. To this end, we impose the following condition on the denominator function:
\begin{equation}\label{eq:NSFD4}
\phi(h) < \phi_P := (\gamma)^{-1}, \quad \forall h > 0.
\end{equation}
\begin{theorem}[The positivity of the approximations]\label{Theorem1NSFD}
Let $S_0, I_0 \geq 0$ be arbitrary initial data for \eqref{eq:1}, and assume that the condition \eqref{eq:NSFD4} holds. Then, for any step size, the approximations generated by \eqref{eq:NSFD1st} satisfy $S_k, I_k \geq 0$ for all $k \geq 0$. In other words, NSFD schemes of the form \eqref{eq:NSFD1st} are dynamically consistent with respect to the positivity of the solutions of \eqref{eq:1}.
\end{theorem}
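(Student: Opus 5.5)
The plan is an induction on $k$, with the scheme \eqref{eq:NSFD1st} solved explicitly. The scheme is linearly implicit and triangular: the first equation contains only the unknown $S_{k+1}$, and the second then contains only $I_{k+1}$. Assume $S_k, I_k \geq 0$, so that $N_k \geq 0$ and $h(N_k) \geq 2 > 0$. Then the first equation gives
\begin{equation*}
S_{k+1} = \dfrac{S_k + \phi(\Delta t)\left(\Lambda + \gamma I_k\right)}{1 + \phi(\Delta t)\left(\dfrac{\beta I_k}{h(N_k)} + \mu\right)}.
\end{equation*}
The denominator is at least $1$, and the numerator is at least $\phi(\Delta t)\Lambda > 0$. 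Hence $S_{k+1} > 0$ whenever $\phi(\Delta t) > 0$, which we take as part of the definition of a denominator function.

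Next, the second equation gives
\begin{equation*}
I_{k+1} = \dfrac{I_k\left(1 - \gamma\phi(\Delta t)\right) + \phi(\Delta t)\dfrac{\beta S_{k+1} I_k}{h(N_k)}}{1 + (\mu + \delta)\phi(\Delta t)}.
\end{equation*}
The denominator is positive. The second term of the numerator is nonnegative, because $S_{k+1} > 0$ and $I_k \geq 0$. The first term is nonnegative exactly because of condition \eqref{eq:NSFD4}: $\phi(\Delta t) < \gamma^{-1}$ gives $1 - \gamma\phi(\Delta t) > 0$. Therefore $I_{k+1} \geq 0$, with strict inequality when $I_k > 0$.

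The base case is the hypothesis $S_0, I_0 \geq 0$, and induction gives $S_k, I_k \geq 0$ for all $k$. Since \eqref{eq:NSFD4} is required for every $h > 0$, the conclusion holds for every step size $\Delta t$.

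The only delicate point is the sign of the term $-\gamma I_k$, which is treated explicitly in the $I$-equation. This is the sole place where \eqref{eq:NSFD4} is needed, and I would emphasize it. Two further remarks complete the argument. First, the scheme is well defined, since $h(N_k) \geq 2$ whenever $N_k \geq 0$. Second, these explicit formulas show that the positivity of $I$ and the triangular structure follow directly from the nonlocal approximation \eqref{eq:NSFD2}.
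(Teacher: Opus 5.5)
Your proposal is correct and follows the paper's proof. Both rewrite the scheme in the explicit form \eqref{eq:5new}, use \eqref{eq:NSFD4} only to get $1-\phi\gamma>0$ in the $I$-update, and conclude by induction on $k$. Your remarks that $h(N_k)\ge 2$ and $\phi(\Delta t)>0$ make explicit the well-definedness and sign facts that the paper leaves implicit.
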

\begin{proof}
The NSFD schemes of the form \eqref{eq:NSFD1st} can be rewritten in the form
\begin{equation}\label{eq:5new}
\begin{split}
&S_{k + 1} = \dfrac{S_k + \phi \Lambda + \phi\gamma I_k}{1 + \phi\dfrac{{\beta}I_k}{h(N_k)} + \phi\mu},\\
&I_{k + 1} = \dfrac{\left(1 - \phi\gamma\right)I_k + \phi\dfrac{{\beta} S_{k+1} I_k}{h(N_k)}}{1 + \phi(\mu + \delta)}.
\end{split}
\end{equation}
Since $\phi(h)$ satisfies \eqref{eq:NSFD4}, $1 - \phi\gamma > 0$. Combining this with \eqref{eq:5new} implies that $S_{k+1}, I_{k + 1} \geq 0$ whenever $S_k, I_k \geq 0$. Thus, by mathematical induction, we conclude that $S_k, I_k \geq 0$ for $k > 0$ if $S_0, I_0 \geq 0$. This is the desired conclusion. The proof is complete.
\end{proof}
Throughout this section, we always assume that \eqref{eq:NSFD4} holds. From \eqref{eq:5new}, by considering the system $S_{k + 1} = S_k$ and $I_{k + 1} = I_k$, we conclude that the sets of equilibrium points of \eqref{eq:1} and \eqref{eq:NSFD1st} are identical. Thus, the NSFD model \eqref{eq:NSFD1st} always possesses a DFE point for all values of the model parameters, whereas a DEE point $E_*$ exists if and only if $\mathcal{R}_0 > 1$, where $E_0$ and $E_*$ are defined as in Section \ref{Sec2}. Note that a direct calculation using the method developed by Allen and van den Driessche in \cite{Allen1} shows that the basic reproduction number of \eqref{eq:NSFD1st} coincides with that of \eqref{eq:1}.

To analyze the local asymptotic stability of the equilibrium points of \eqref{eq:NSFD1st}, we apply the linearization method \cite{Allen, Stuart} with the help of the Schur--Cohn criterion \cite{Allen}. Following the approaches introduced in \cite{DangHoang2018, Dimitrov6, Wood1}, we derive a local asymptotic stability (LAS) threshold $\phi_{\mathrm{LAS}}$ for \eqref{eq:NSFD1st} that depends only on the parameter values of \eqref{eq:1}. More precisely, \eqref{eq:NSFD1st} preserves the LAS of the corresponding equilibrium points of \eqref{eq:1} whenever
\begin{equation}\label{eq:6new}
\phi(\Delta t) < \phi_{LAS}, \quad \forall \Delta t > 0.
\end{equation}
Combining \eqref{eq:NSFD4} and \eqref{eq:6new} leads to the condition for the denominator function
\begin{equation}\label{eq:6b}
\phi(\Delta t) < \phi_{DC} := \min\{\phi_P,\,\phi_{LAS}\}, \quad \forall \Delta t > 0.
\end{equation}

Note that there are many functions satisfying \eqref{eq:6b}, for instance:
\begin{equation*}
\phi(h) = \dfrac{1 - e^{-ch}}{c}, \quad c > \dfrac{1}{\phi_{DC}^{-1}}.
\end{equation*}
Establishing the GAS of equilibrium points is challenging for NSFD schemes in general and for schemes of the form \eqref{eq:NSFD1st} in particular. Nevertheless, the numerical simulations presented in the next section suggest that the NSFD schemes of the form \eqref{eq:NSFD1st} preserve the GAS whenever the condition for the LAS preservation is satisfied.

Using the error analysis presented in \cite{Cresson}, we conclude that the NSFD schemes of the form \eqref{eq:NSFD1st} are only first-order accurate and therefore converges with order one. The numerical simulations in the next section will illustrate this assertion.
\subsection{Construction of Second-Order NSFD Schemes}
In this subsection, we use the approach in \cite{HoangMatthias2} to construct a family of second-order NSFD schemes for \eqref{eq:1}. Following this approach, we extend the first-order NSFD schemes of the form \eqref{eq:NSFD1st} as follows
\begin{equation}\label{eq:NSFD2nd}
\begin{split}
&\dfrac{S_{k + 1} - S_k}{\Phi_1(\Delta t, S_k, I_k)} = \Lambda - \dfrac{{\beta} S_{k + 1} I_k}{h(N_k)}- \mu S_{k + 1} + \gamma I_k + \tau_1 S_k - \tau_1 S_{k + 1},\\
&\dfrac{I_{k + 1} - I_k}{\Phi_2(\Delta t, S_k, I_k)} = \dfrac{{\beta} S_{k} I_k}{h(N_k)} - (\mu+\delta + \gamma) I_{k + 1} + \tau_2 I_k - \tau_2 I_{k + 1},
\end{split}
\end{equation}
where $0 < \Phi_i(\Delta, S, I) = \Delta t + \mathcal{O}{(\Delta t^2)}$ as $\Delta t \to 0$ for all $S, I \geq 0$.
The main difference between \eqref{eq:NSFD2nd} and \eqref{eq:NSFD1st} is that the former employs two distinct denominator functions in the discrete derivative approximations; moreover, the approximation of the right-hand side incorporates the additional weighted discrete terms $\tau_1(S_k-S_{k+1})$ and $\tau_2(I_k-I_{k+1})$. Consequently, \eqref{eq:NSFD2nd} is more flexible and can achieve second-order convergence through appropriate choices of the denominator functions, while its dynamic consistency is ensured by the weights $\tau_i$ $(i=1,2)$. The details are presented below.
\begin{theorem}[The positivity of the approximations]\label{Theorem2NSFD}
Let $S_0, I_0 \geq 0$ be arbitrary initial data for \eqref{eq:1} and $\tau_1$ and $\tau_2$ be real numbers satisfying
\begin{equation}\label{eq:7}
\tau_1 \geq \tau_1^P := 0, \quad \tau_2 \geq \tau_2^P :=0.
\end{equation}
Then, for any step size, the approximations generated by \eqref{eq:NSFD2nd} satisfy $S_k, I_k \geq 0$ for all $k \geq 0$. In other words, the NSFD schemes of the form \eqref{eq:NSFD2nd} are dynamically consistent with respect to the positivity of the solutions of \eqref{eq:1}.
\end{theorem}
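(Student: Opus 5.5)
Since both equations in \eqref{eq:NSFD2nd} are linear in the unknowns at level $k+1$, the plan is to solve them explicitly, exactly as in the proof of Theorem \ref{Theorem1NSFD}. Then we show by induction on $k$ that every numerator and denominator is nonnegative, respectively positive, whenever $S_k, I_k \geq 0$. Write $\Phi_1 = \Phi_1(\Delta t, S_k, I_k) > 0$ and $\Phi_2 = \Phi_2(\Delta t, S_k, I_k) > 0$. Positivity of these holds for every step size and every $S_k, I_k \geq 0$ by assumption.

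First, the $S$-equation involves only $S_{k+1}$ as an unknown, because $I_k$ and $N_k$ are known. Multiplying by $\Phi_1$ and collecting the $S_{k+1}$ terms gives
\begin{equation*}
S_{k+1} = \dfrac{(1 + \Phi_1\tau_1)S_k + \Phi_1\Lambda + \Phi_1\gamma I_k}{1 + \Phi_1\dfrac{\beta I_k}{h(N_k)} + \Phi_1\mu + \Phi_1\tau_1}.
\end{equation*}
Similarly, the $I$-equation contains the unknown only through $I_{k+1}$. Its infection term is evaluated at $S_k$, so it does not even depend on $S_{k+1}$. Solving gives
\begin{equation*}
I_{k+1} = \dfrac{(1 + \Phi_2\tau_2)I_k + \Phi_2\dfrac{\beta S_k I_k}{h(N_k)}}{1 + \Phi_2(\mu + \delta + \gamma) + \Phi_2\tau_2}.
\end{equation*}

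Next comes the sign check. Since $b>0$ and $N_k = S_k + I_k \geq 0$, we have $h(N_k) = 1 + bN_k + \sqrt{1+2bN_k} \geq 2 > 0$, so the quotient $\beta I_k/h(N_k)$ is well defined and nonnegative. Condition \eqref{eq:7} gives $\tau_i \geq 0$, hence $1 + \Phi_i\tau_i \geq 1 > 0$. Together with $\Lambda, \mu, \delta, \gamma \geq 0$, this makes both denominators at least $1$ and both numerators nonnegative whenever $S_k, I_k \geq 0$. Therefore $S_{k+1}, I_{k+1} \geq 0$, and induction from $S_0, I_0 \geq 0$ gives the claim for all $k$ and every $\Delta t > 0$.

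This also explains why no restriction of the type \eqref{eq:NSFD4} is needed here, unlike in Theorem \ref{Theorem1NSFD}. In \eqref{eq:NSFD2nd} the recovery term $\gamma I$ is treated implicitly, so no factor like $1 - \Phi\gamma$ appears.

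There is no real obstacle. The only points needing care are that $\Phi_i$ depend on $(S_k, I_k)$ but are positive for all nonnegative arguments, and that $h(N_k) > 0$ on the nonnegative orthant. The thresholds $\tau_i^P = 0$ simply ensure that the coefficients $1 + \Phi_i\tau_i$ of the old values remain positive. In fact any $\tau_i > -1/\Phi_i$ would suffice, but the stated uniform condition $\tau_i \geq 0$ is what makes the result independent of the step size.
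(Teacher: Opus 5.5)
Your proposal is correct and follows the paper's proof. Both solve the linearly implicit scheme explicitly to get the form \eqref{eq:8}, use $\tau_i \geq 0$ and $\Phi_i > 0$ to make every numerator nonnegative and every denominator positive, and conclude by induction. Your checks that $h(N_k) \geq 2$ and that no analogue of \eqref{eq:NSFD4} is needed (because $\gamma I$ is treated implicitly) fill in details the paper leaves tacit.
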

\begin{proof}
First, we transform \eqref{eq:NSFD2nd} into the form
\begin{equation}\label{eq:8}
\begin{split}
&S_{k + 1} = \dfrac{(1 + \Phi_1\tau_1)S_k + \Phi_1\Lambda + \Phi_1\gamma I_k}{1 + \Phi_1\dfrac{{\beta}I_k}{h(N_k)} + \Phi_1\mu + \Phi_1\tau_1},\\
&I_{k + 1} = \dfrac{\left(1 + \Phi_2\tau_2\right)I_k + \Phi_2\dfrac{{\beta} S_{k} I_k}{h(N_k)}}{1 + \Phi_2(\mu + \gamma+ \delta) + \Phi_2\tau_2}.
\end{split}
\end{equation}
Thus, it follows from \eqref{eq:7} and \eqref{eq:8} that $S_{k+1}, I_{k + 1} \geq 0$ whenever $S_k, I_k \geq 0$. By mathematical induction, we conclude that $S_k, I_k \geq 0$ for $k > 0$ if $S_0, I_0 \geq 0$. This is the desired conclusion. The proof is complete.
\end{proof}
Throughout this section, we always assume that \eqref{eq:7} is satisfied. From \eqref{eq:8}, it is easily verified that the sets of equilibrium points of \eqref{eq:1} and \eqref{eq:NSFD2nd} are identical.

Using the approach and calculations presented in \cite{HoangMatthias2}, we derive stability thresholds $\tau_i^{LAS} > 0$ for the parameters $\tau_i$ in \eqref{eq:NSFD2nd}. More precisely, \eqref{eq:NSFD2nd} preserves the LAS of the corresponding equilibrium points of \eqref{eq:1} whenever
\begin{equation}\label{eq:9}
\tau_1 \geq \tau_1^{LAS},\quad \quad \tau_2  \geq \tau^{LAS}_2.
\end{equation}
From \eqref{eq:7} and \eqref{eq:9}, we obtain a condition imposed on $\tau_i$, which ensures the dynamic consistency of \eqref{eq:NSFD2nd} and is given by
\begin{equation*}
\tau_1 \geq \max\left\{\tau_1^{P},\,\,\,\tau_1^{LAS}\right\},\quad \tau_2 \geq \max\left\{\tau_2^{P},\,\,\,\tau_2^{LAS}\right\}.
\end{equation*}
A condition imposed on the denominator functions to ensure that the NSFD schemes of the form \eqref{eq:NSFD2nd} are second-order accurate is formulated below.
\begin{theorem}[Second-order NSFD schemes]\label{Theorem3}
Let $\Phi_1(\Delta t, S, I)$ and $\Phi_2(\Delta t, S, I)$ be functions satisfying the following condition
\begin{equation}\label{eq:10}
\begin{split}
&\dfrac{\partial^2 \Phi_1}{\partial \Delta t^2}(0, S, I)= D_1(S, I) := 2\left(\dfrac{{\beta}I}{h(S + I)} + \mu + \tau_1\right)+\dfrac{\partial F(S, I)}{\partial S}+\dfrac{\partial F(S, I)}{\partial I} \dfrac{G(S, I)}{F(S, I)},\\
&\dfrac{\partial^2 \Phi_2}{\partial \Delta t^2}(0, S, I)= D_2(S, I) := 2\left(\delta + \gamma + \mu + \tau_2\right) + \dfrac{\partial G(S, I)}{\partial S}\dfrac{F(S, I)}{G(S, I)} + \dfrac{\partial G(S, I)}{\partial I},
\end{split}
\end{equation}
for all $(S, I) \in \mathbb{R}_{+}^2$ such that $F(S, I), G(S, I) \neq 0$, where $\left(F(S, I),\, G(S, I)\right)^{\top}$ is the right-hand side function of the model \eqref{eq:1}. Then, the truncation error of the NSFD schemes of the form \eqref{eq:NSFD2nd} is $\mathcal{O}\left(\Delta t^3\right)$, i.e. the schemes are consistent of order $2$.
\end{theorem}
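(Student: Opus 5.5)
The plan is to rewrite each update of \eqref{eq:NSFD2nd} in explicit form, expand it in powers of $\Delta t$, and compare it term by term with the Taylor expansion of the exact solution over one step.

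\textbf{Step 1 (explicit increments).} Subtracting $S_k$ from both sides of \eqref{eq:8} and collecting terms gives
\begin{equation*}
S_{k+1}-S_k=\frac{\Phi_1\,F(S_k,I_k)}{1+\Phi_1 a_1(S_k,I_k)},\qquad a_1(S,I):=\frac{\beta I}{h(S+I)}+\mu+\tau_1 .
\end{equation*}
For the $S$-equation the numerator becomes $\Lambda+\gamma I_k-\bigl(\beta I_k/h(N_k)+\mu\bigr)S_k=F(S_k,I_k)$. In the same way, because the second equation of \eqref{eq:NSFD2nd} uses $S_k$ in the incidence term,
\begin{equation*}
I_{k+1}-I_k=\frac{\Phi_2\,G(S_k,I_k)}{1+\Phi_2 a_2},\qquad a_2:=\mu+\delta+\gamma+\tau_2 .
\end{equation*}
So the scheme is a one-step map of the form $y_{k+1}=y_k+\Psi(\Delta t,y_k)$, and the truncation error is $\tau(\Delta t)=y(t_k+\Delta t)-y(t_k)-\Psi(\Delta t,y(t_k))$ evaluated along an exact solution.

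\textbf{Step 2 (expansions).} Along the exact solution,
\begin{equation*}
S(t_k+\Delta t)-S(t_k)=\Delta t\,F+\frac{\Delta t^2}{2}\bigl(F_S F+F_I G\bigr)+\mathcal{O}(\Delta t^3),
\end{equation*}
and analogously for $I$, with $G_S F+G_I G$ in the quadratic term. On the scheme side, $\Phi_i(0)=0$ and $\partial_{\Delta t}\Phi_i(0)=1$, so
\begin{equation*}
\Phi_i=\Delta t+\tfrac12\Phi_i''(0)\,\Delta t^2+\mathcal{O}(\Delta t^3),\qquad \frac{\Phi_i}{1+\Phi_i a_i}=\Phi_i-a_i\Phi_i^2+\mathcal{O}(\Phi_i^3).
\end{equation*}
Combining these, the $S$-increment equals
\begin{equation*}
\Delta t\,F+\Delta t^2\bigl(\tfrac12\Phi_1''(0)-a_1\bigr)F+\mathcal{O}(\Delta t^3).
\end{equation*}
The first-order terms agree automatically. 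Matching the $\Delta t^2$ coefficients and dividing by $F\neq0$ gives
\begin{equation*}
\Phi_1''(0)=2a_1+F_S+F_I\,\frac{G}{F},
\end{equation*}
which is exactly $D_1$ in \eqref{eq:10}. The $I$-component works the same way: $\Phi_2''(0)=2a_2+G_S\,F/G+G_I=D_2$. Under \eqref{eq:10} both second-order terms therefore cancel, and the local error is $\mathcal{O}(\Delta t^3)$.

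\textbf{Main obstacle.} The hard part is making the $\mathcal{O}(\Delta t^3)$ rigorous and uniform. For this I will assume $\Phi_i$ is $C^3$ in $\Delta t$, with third derivatives bounded on compact subsets of $\mathbb{R}_+^2$, and restrict to trajectories in the bounded region $\Omega$. The remainder terms of both expansions are then controlled uniformly via Taylor's theorem with integral remainder.

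A second subtlety is the set where $F=0$ or $G=0$. There the prescription \eqref{eq:10} is singular, and the exact quadratic term, e.g.\ $\tfrac12F_IG$ for $S$, need not vanish while the scheme's quadratic term does. I will therefore state the cancellation pointwise on $\{F\neq0,\ G\neq0\}$, as the theorem itself restricts. I will remark that on the exceptional set one needs $\Phi_i''(0)$ to extend continuously, which in \cite{HoangMatthias2} is handled by a renormalized choice of denominators, or else accept the result away from that set.
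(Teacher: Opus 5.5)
Your proposal is correct and is essentially the paper's argument: the paper also works with the explicit form \eqref{eq:8}, computes $\partial^2_{\Delta t}F_D(0,S,I)=F\,[\Phi_1''(0)-2a_1]$ (which is your expansion of $\Phi_1F/(1+\Phi_1a_1)$) and the analogue for $G_D$, and matches these against $\tfrac12(F_SF+F_IG)$ and $\tfrac12(G_SF+G_IG)$ from the Taylor expansion of the exact solution. Your caveat about the set where $F=0$ or $G=0$ is a genuine refinement the paper does not make, but it also undercuts your proposed uniform bound on all of $\Omega$: near that set $D_1$ blows up, and at a point with $F=0\neq F_IG$ the local error is only $\mathcal{O}(\Delta t^2)$, so uniformity can only be claimed on compact sets bounded away from the nullclines.
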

\begin{proof}
First, using Taylor expansion for the solution components $S(t)$ and $I(t)$ yields
\begin{equation}\label{eq:11}
\begin{split}
S\left(t_{k+1}\right) = S\left(t_k + \Delta t\right) &= S\left(t_k\right)+\Delta t S^{\prime}\left(t_k\right)+\dfrac{\Delta t^2}{2} S^{\prime \prime}\left(t_k\right)+\mathcal{O}\left(\Delta t^3\right)\\
& =S\left(t_k\right)+\Delta t F\left(S\left(t_k\right), I\left(t_k\right)\right)+\dfrac{\Delta t^2}{2} \dfrac{\partial F\left(S\left(t_k\right), I\left(t_k\right)\right)}{\partial t}+\mathcal{O}\left(\Delta t^3\right), \\
I\left(t_{k+1}\right) = I\left(t_k + \Delta t\right)& =I\left(t_k\right)+\Delta t I^{\prime}\left(t_k\right)+\dfrac{\Delta t^2}{2} I^{\prime \prime}\left(t_k\right)+\mathcal{O}\left(\Delta t^3\right) \\
& =I\left(t_k\right)+\Delta t G\left(S\left(t_k\right), I\left(t_k\right)\right)+\dfrac{\Delta t^2}{2} \dfrac{\partial G\left(S\left(t_k\right), I\left(t_k\right)\right)}{\partial t}+\mathcal{O}\left(\Delta t^3\right).
\end{split}
\end{equation}
Let us denote by $\left(F_D\left(\Delta t, S_k, I_k\right), G_D\left(\Delta t, S_k, I_k\right)\right)^{\top}$ the right-side function of \eqref{eq:8}. It follows from \eqref{eq:8} that
\begin{equation}\label{eq:12}
\begin{split}
&F_D(0, S, I) = S,\\
&\dfrac{\partial F_D(0, S, I)}{\partial \Delta t} = F(S, I),\\
&\dfrac{\partial^2 F_D(0, S, I)}{\partial \Delta t^2} = F(S, I)\left[\dfrac{\partial^2 \Phi_1(0, S, I)}{\partial \Delta t^2} - 2\left(\dfrac{{\beta}I}{h(S + I)} + \mu + \tau_1\right)\right],\\
&G_D(0, S, I) = I, \\
&\dfrac{\partial G_D(0, S, I)}{\partial \Delta t}=G(S, I),\\
&\dfrac{\partial^2 G_D(0, S, I)}{\partial \Delta t^2}=G(S, I)\left[\dfrac{\partial^2 \Phi_2(0, S, I)}{\partial \Delta t^2} - 2\left(\delta + \gamma + \mu +  \tau_2\right)\right].
\end{split}
\end{equation}
Combining \eqref{eq:12} with Taylor expansion theorem gives
\begin{equation}\label{eq:13}
\begin{split}
S_{k+1}= & F_D\left(\Delta t, S_k, I_k\right) = F_D(0, S_k, I_k) + \Delta t \dfrac{\partial F_D(0, S_k, I_k)}{\partial \Delta t} +\dfrac{\Delta t^2}{2} \dfrac{\partial^2 F_D(0, S_k, I_k)}{\partial \Delta t^2} + \mathcal{O}\left(\Delta t^3\right),\\
= &S_k + \Delta t F\left(S_k, I_k\right) + \dfrac{\Delta t^2}{2} F\left(S_k, I_k\right){\left[\dfrac{\partial^2 \Phi_1\left(0, S_k, I_k\right)}{\partial \Delta t^2} - 2\left(\dfrac{{\beta}I}{h(S_k + I_k)} + \mu + \tau_1\right)\right]+\mathcal{O}\left(\Delta t^3\right), }\\
I_{k+1}= & G_D\left(\Delta t, S_k, I_k\right) = G_D(0, S_k, I_k) + \Delta t \dfrac{\partial G_D(0, S_k, I_k)}{\partial \Delta t} +\dfrac{\Delta t^2}{2} \dfrac{\partial^2 G_D(0, S_k, I_k)}{\partial \Delta t^2} + \mathcal{O}\left(\Delta t^3\right)\\
= &I_k + \Delta t G\left(S_k, I_k\right)+\dfrac{\Delta t^2}{2} G\left(S_k, I_k\right)\left[\dfrac{\partial^2 \Phi_2\left(0, S_k, I_k\right)}{\partial \Delta t^2} - 2\left(\delta + \gamma+ \mu + \tau_2\right)\right]+\mathcal{O}\left(\Delta t^3\right).
\end{split}
\end{equation}
Thus, we deduce from \eqref{eq:11} and \eqref{eq:13} that
\begin{equation*}
S_{k+1} - S\left(t_{k+1}\right) = \mathcal{O}\left(\Delta t^3\right), \quad I_{k+1} - I\left(t_{k+1}\right) = \mathcal{O}\left(\Delta t^3\right)
\end{equation*}
if \eqref{eq:10} holds. This is the desired conclusion and the proof is complete.
\end{proof}
\begin{remark}
A family of denominator functions satisfying \eqref{eq:10} is given by
\begin{equation}\label{eq:14}
\Phi(\Delta t, S, I) =
\begin{cases}
&\dfrac{e^{D(S, I)\Delta t} - 1}{D(S, I)} \quad \mbox{if} \quad D(S, I) \ne 0,\\
&\Delta t \quad \mbox{if} \quad D(S, I) = 0.
\end{cases}
\end{equation}
The functions of this form will be used in numerical examples reported in the next section.
\end{remark}
\section{Numerical Examples}\label{Sec4}
In this section, we conduct a set of numerical examples to support the theoretical findings. First, we provide an error analysis of the constructed first-order and second-order NSFD schemes. In the numerical examples reported below, we will use the first-order NSFD scheme of the form \eqref{eq:NSFD1st} with $\phi(\Delta t) = 1 - e^{-\Delta t}$ and the second-order NSFD scheme of the form \eqref{eq:NSFD2nd} with $\tau_1 = \tau_2 = 1$. 
\begin{example}[An Error Analysis of the NSFD schemes]
In this example, we compute the errors produced by the constructed first- and second-order NSFD schemes when solving \eqref{eq:1} with the following parameter set:
\begin{equation*}
\Lambda = 100, \quad \mu = 2.5 \times 10^{-4}, \quad \gamma =  0.7, \quad \delta = 10^{-5}, \quad \beta = 0.05, \quad  b = 0.05 
\end{equation*}
and initial data
\begin{equation*}
S(0) = 35 \times 10^4,\quad I(0) = 10^3.
\end{equation*}
Since the exact solution cannot be determined analytically, we apply the classical four-stage Runge--Kutta method \cite{Ascher, Stuart} with a step size of $\Delta t=10^{-6}$ to generate a reference solution on the interval $[0, 1]$. Then, the errors are computed as follows:
\begin{equation*}
\begin{split}
&\err_R(S) = \dfrac{\left|S_N - S(t_N)\right|}{|S(t_N)|},\\
&\err_R(I) = \dfrac{\left|I_N - I(t_N)\right|}{|I(t_N)|},\\
&\err_R(S, I) = \dfrac{\left|S_N - S(t_N)\right| + \left|I_N - I(t_N)\right|}{\left|S(t_N)\right| + \left|I(t_N)\right|},\\
&\err_F = \left|S_N - S(t_N)\right| + \left|I_N - I(t_N)\right|,
\end{split}
\end{equation*}
where $t_N = 1$ and $\left(S(t_N), I(t_N)\right)$ standards for the reference solution. Moreover, the rate of convergence (ROC) is estimated by \cite{Ascher}
\begin{equation*}
\ROC := \log_{\bigg(\dfrac{\Delta t_1}{\Delta t_2}\bigg)}\bigg(\dfrac{\err_F(\Delta t_1)}{\err_F(\Delta t_2)}\bigg).
\end{equation*}
The numerical results for the first-order and second-order NSFD schemes and the explicit Euler scheme \cite{Ascher, Stuart} are reported in Tables \ref{Table1}-\ref{Table3}. Clearly, the obtained error estimates and ROCs are consistent with the theoretical analysis presented in Section \ref{Sec3}.
\begin{table}[H]
\begin{center}
\caption{The errors and ROC of the first-order NSFD scheme}\label{Table1}
\begin{tabular}{ccccccccccc}
\hline
$\Delta t$&$\err_R(S)$&$\err_R(I)$&$\err_R(S, I)$&$\err_F$&ROC\\
\hline
$10^{-1}$ & 7.5884e-005 & 0.0205 &  1.5328e-004 &  53.8043 & \\
\hline
$5 \times 10^{-2}$ & 3.9278e-005 & 0.0106 & 7.9325e-005 &  27.8441 & 0.9504\\
\hline
$10^{-2}$ & 8.0825e-006 & 0.0022 & 1.6321e-005 & 5.7288 &  0.9824\\
\hline
$5 \times 10^{-3}$ & 4.0559e-006 & 0.0011 & 8.1898e-006 &  2.8747 & 0.9948\\
\hline 
$10^{-3}$ & 8.1353e-007 & 2.1949e-004 & 1.6427e-006 & 0.5766 & 0.9982\\
\hline
$5 \times 10^{-4}$ & 4.0691e-007 & 1.0979e-004 & 8.2164e-007 &  0.2884 & 0.9995 \\
\hline
$10^{-4}$ & 8.1407e-008 &  2.1963e-005 &  1.6438e-007 &  0.0577 & 0.9998\\
\hline
$5 \times 10^{-5}$ & 4.0764e-009 &  1.0982e-006 &  8.2250e-009 & 0.0029 & 0.9999\\
\hline
\end{tabular}
\end{center}
\end{table}
\end{example}
\begin{table}[H]
\begin{center}
\caption{The errors and ROC of the second-order NSFD scheme}\label{Table2}
\begin{tabular}{ccccccccccc}
\hline
$\Delta t$&$\err_R(S)$&$\err_R(I)$&$\err_R(S, I)$&$\err_F$&ROC\\
\hline
$10^{-1}$ &  5.3788e-006 &  0.0032 & 1.7335e-005 & 6.0849 &\\
\hline
$5 \times 10^{-2}$ & 1.3650e-006 & 7.9855e-004 & 4.3876e-006 & 1.5401 &  1.9822\\
\hline
$10^{-2}$ & 5.5084e-008 &  3.2056e-005 & 1.7642e-007 & 0.0619 & 1.9968 \\
\hline
$5 \times 10^{-3}$ & 1.3783e-008 & 8.0150e-006 & 4.4121e-008 & 0.0155 & 1.9995 \\
\hline
$10^{-3}$ & 5.5170e-010 & 3.2061e-007 & 1.7652e-009 & 6.1962e-004 &  1.9999\\
\hline
$5 \times 10^{-4}$ & 1.3793e-010 & 8.0153e-008 &  4.4132e-010 & 1.5491e-004 & 2.0000\\
\hline
$10^{-4}$ & 5.5138e-012 & 3.2061e-009 & 1.7649e-011 & 6.1950e-006 &    2.0001\\
\hline
$5 \times 10^{-5}$ & 1.4064e-012 & 8.0149e-010 & 4.4400e-012& 1.5585e-006& 1.9909\\
\hline
\end{tabular}
\end{center}
\end{table}
\begin{table}[H]
\begin{center}
\caption{The errors and ROC of the standard explicit Euler scheme}\label{Table3}
\begin{tabular}{ccccccccccc}
\hline
$\Delta t$&$\err_R(S)$&$\err_R(I)$&$\err_R(S, I)$&$\err_F$&ROC\\
\hline
$10^{-1}$ &   1.5038e-005 & 0.0040 &  2.9960e-005 & 10.5164 &\\
\hline
$5 \times 10^{-2}$ & 7.5955e-006 & 0.0020 & 1.5133e-005 & 5.3118 & 0.9854\\
\hline
$10^{-2}$ &  1.5316e-006 & 4.0239e-004 & 3.0515e-006 & 1.0711 &  0.9949\\
\hline
$5 \times 10^{-3}$ & 7.6660e-007 &  2.0140e-004 & 1.5273e-006 & 0.5361 & 0.9985 \\
\hline
$10^{-3}$ &  1.5345e-007 & 4.0314e-005 &  3.0572e-007 &  0.1073 & 0.9995\\
\hline
$5 \times 10^{-4}$ & 7.6731e-008 & 2.0159e-005 & 1.5288e-007 & 0.0537 & 0.9999\\
\hline
$10^{-4}$ & 1.5348e-008 &  4.0321e-006 & 3.0578e-008 &  0.0107 & 0.9999\\
\hline
$5 \times 10^{-5}$ & 7.6738e-009 & 2.0161e-006 & 1.5289e-008 & 0.0054 &  1.0000\\
\hline
\end{tabular}
\end{center}
\end{table}
In the following example, we will compare the long-term numerical dynamics of the NSFD schemes and the standard explicit Euler scheme.
\begin{example}[A comparison of the dynamics of the NSFD and standard Euler schemes]
In this example, we provide a comparison of the dynamics of the NSFD and standard Euler schemes. For this purpose, we consider the SIS model \eqref{eq:1} with the following parameters
\begin{equation*}
\Lambda = 100, \quad \mu  = 2.5 \times 10^{-4}, \quad \gamma  = 0.95, \quad \delta  = 10^{-5}, \quad b      = 0.05, \quad \beta   = 0.18
\end{equation*}
and the initial data
\begin{equation*}
S(0) = 35 \times 10^4,\quad I(0) = 10^3.
\end{equation*}
Note that for this parameter set $\mathcal{R}_0 = 3.7507  > 1$. Thus, the DEE point,\\ $E_* = \left(1.0362 \times 10^5,\,2.8498 \times 10^5\right)$ is globally asymptotically stable.
The approximate solutions generated by the numerical schemes for several step sizes are depicted in Figures \ref{Fig:1}--\ref{Fig:4}. It is evident that the standard Euler scheme produces unstable approximations that oscillate around the equilibrium positions. Thus, the dynamical properties of the continuous--time model are not preserved for arbitrary step sizes. In contrast, both the first- and second-order NSFD schemes accurately reproduce the dynamics of the continuous--time model for all the selected step sizes, including relatively large ones. These numerical results are consistent with the theoretical analysis presented in Section \ref{Sec3} and demonstrate the advantages of the NSFD schemes.
\begin{figure}[H]
\subfloat[$S$-component]{%
\includegraphics[height=9cm,width=15cm]{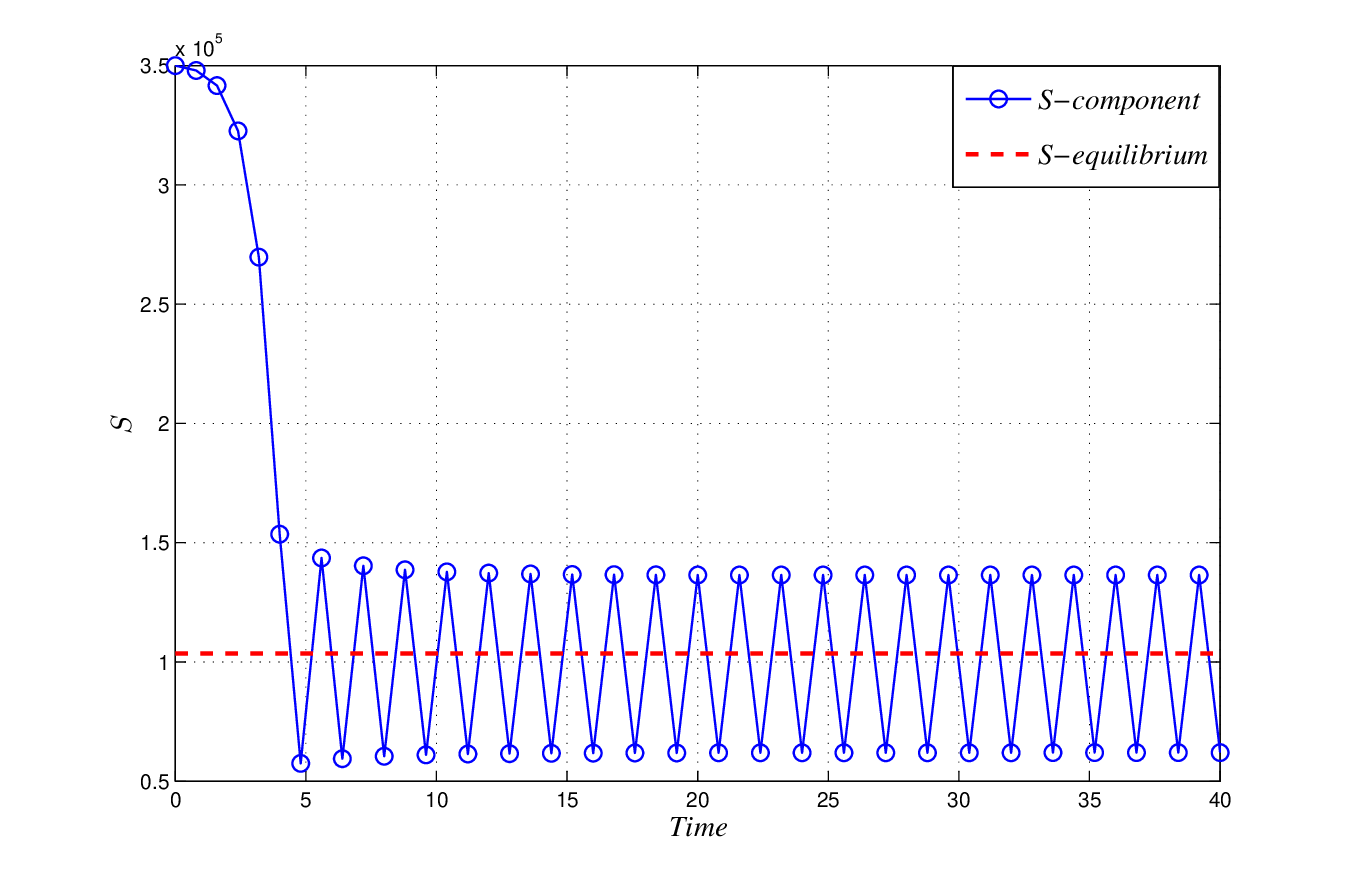}
\label{Figure:1a}
}\hfill
\subfloat[$I$-component]{%
\includegraphics[height=9cm,width=15cm]{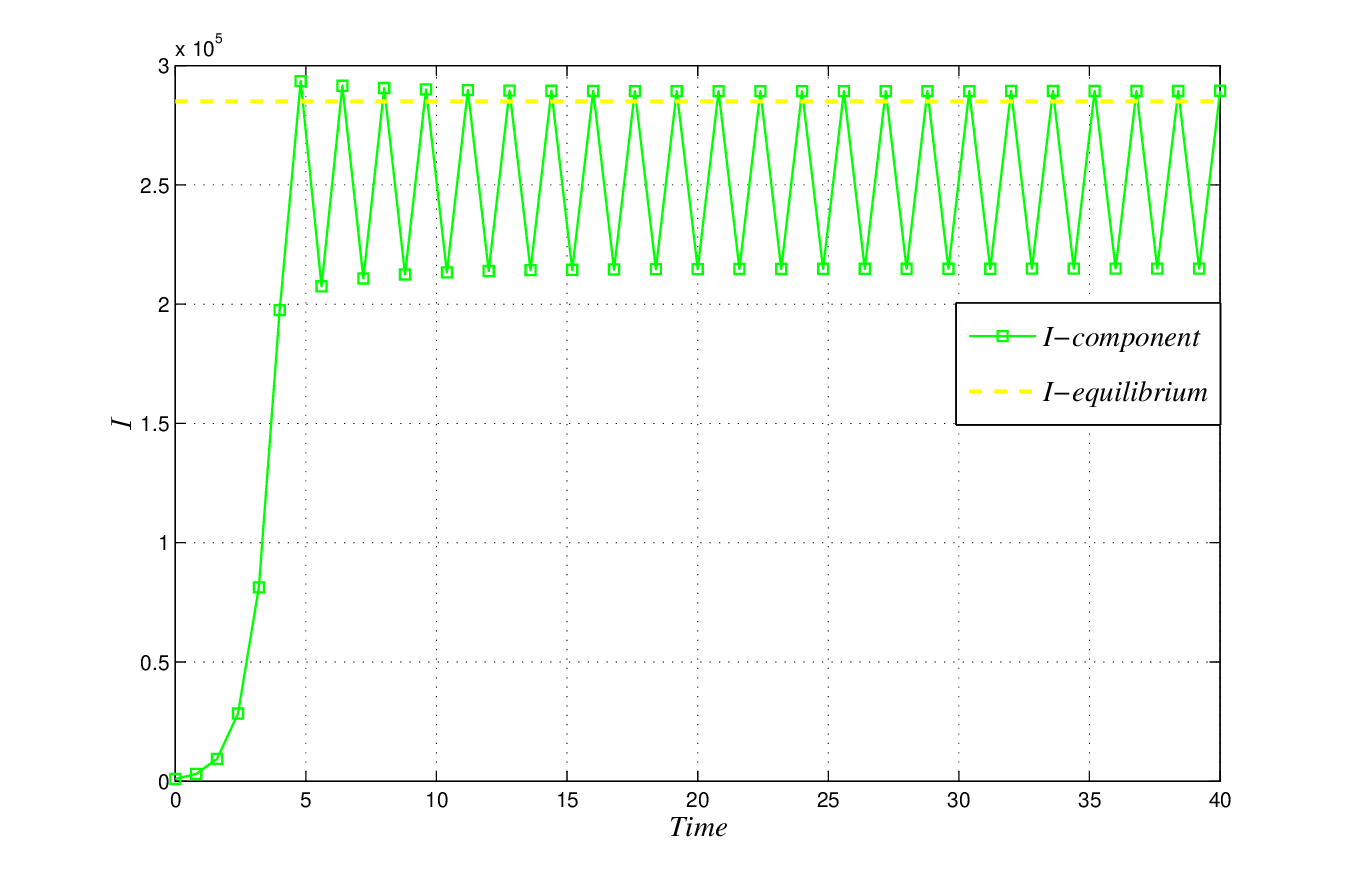}
\label{Figure:1b}
}\hfill
\caption{The numerical approximations generated by the standard explicit Euler scheme with $\Delta t = 0.8$.}\label{Fig:1}
\end{figure}
\begin{figure}[H]
\subfloat[$S$-component]{%
\includegraphics[height=10cm,width=15cm]{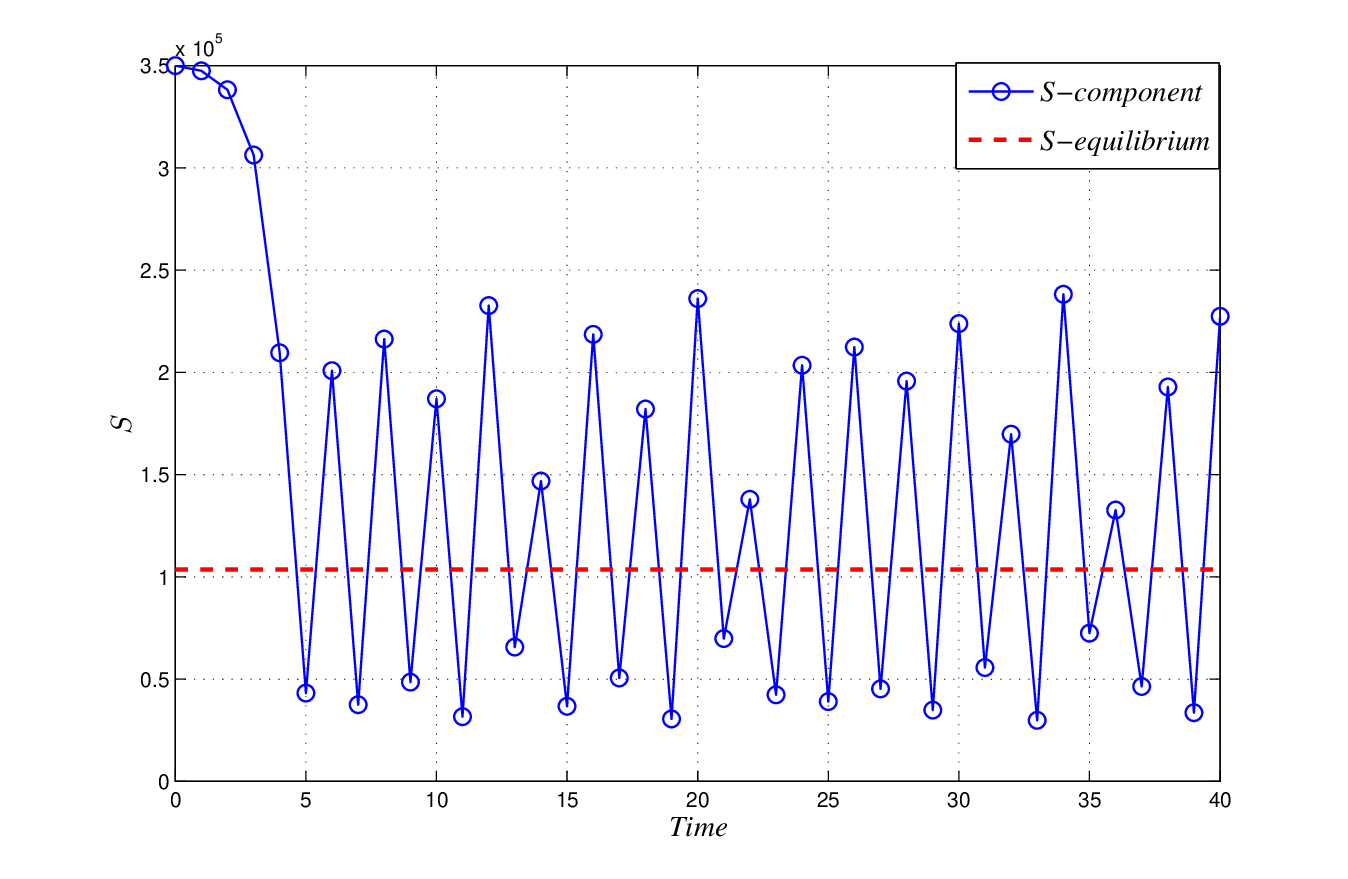}
\label{Figure:2a}
}\hfill
\subfloat[$I$-component]{%
\includegraphics[height=10cm,width=15cm]{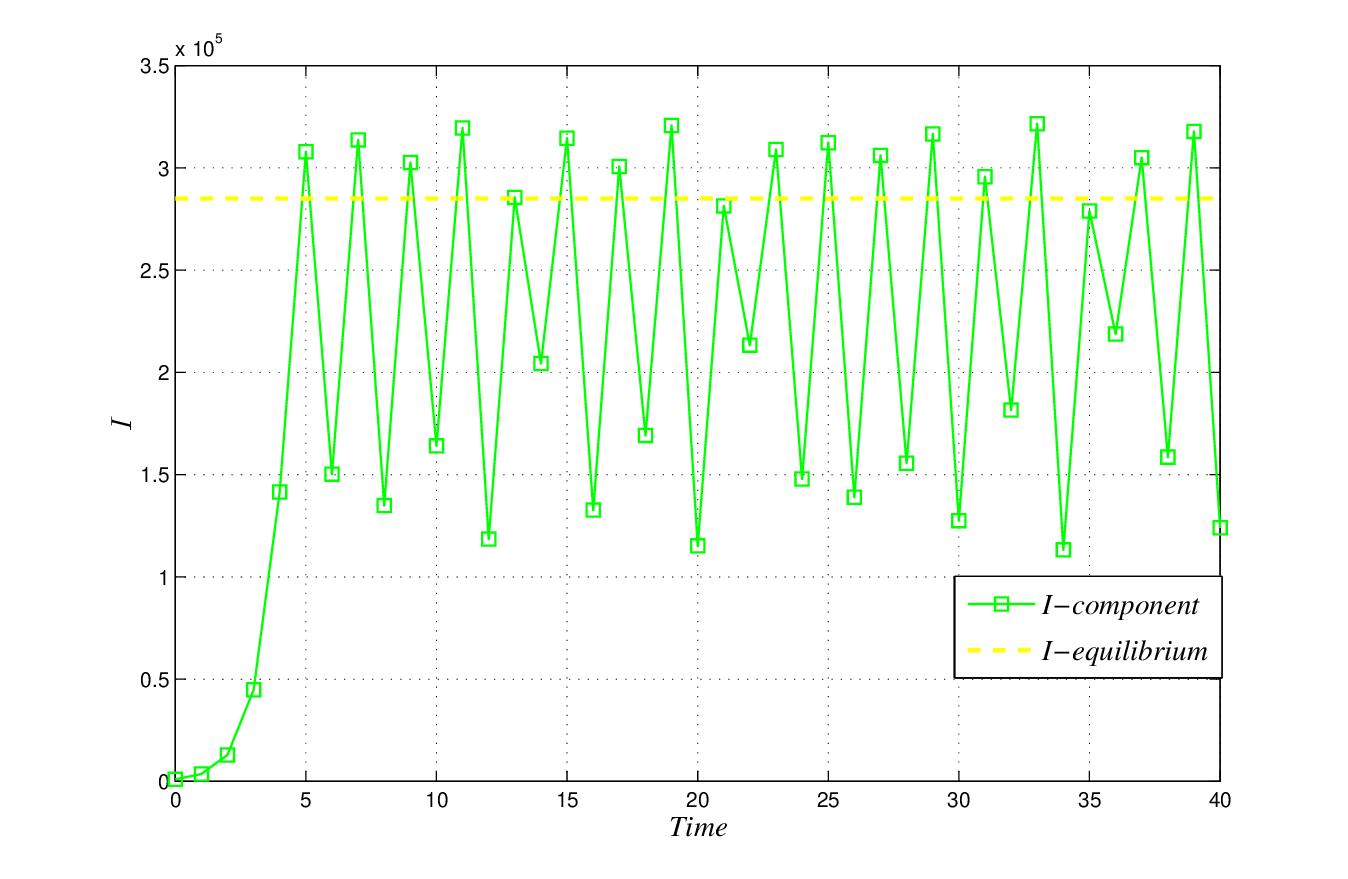}
\label{Figure:2b}
}\hfill
\caption{The numerical approximations generated by the standard explicit Euler scheme with $\Delta t = 1.0$.}\label{Fig:2}
\end{figure}
\begin{figure}[H]
\subfloat[$S$-component]{%
\includegraphics[height=10cm,width=15cm]{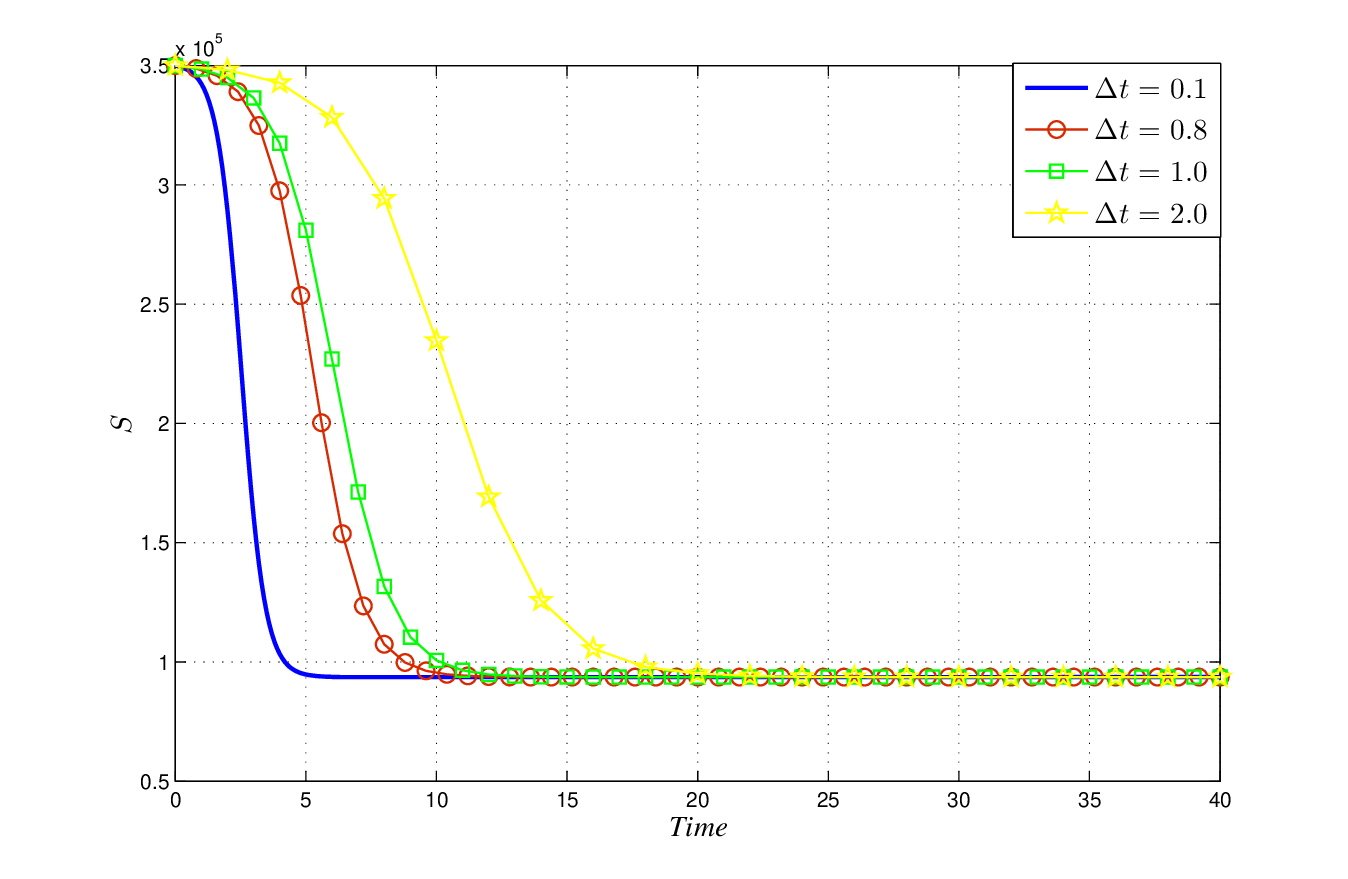}
\label{Figure:3a}
}\hfill
\subfloat[$I$-component]{%
\includegraphics[height=10cm,width=15cm]{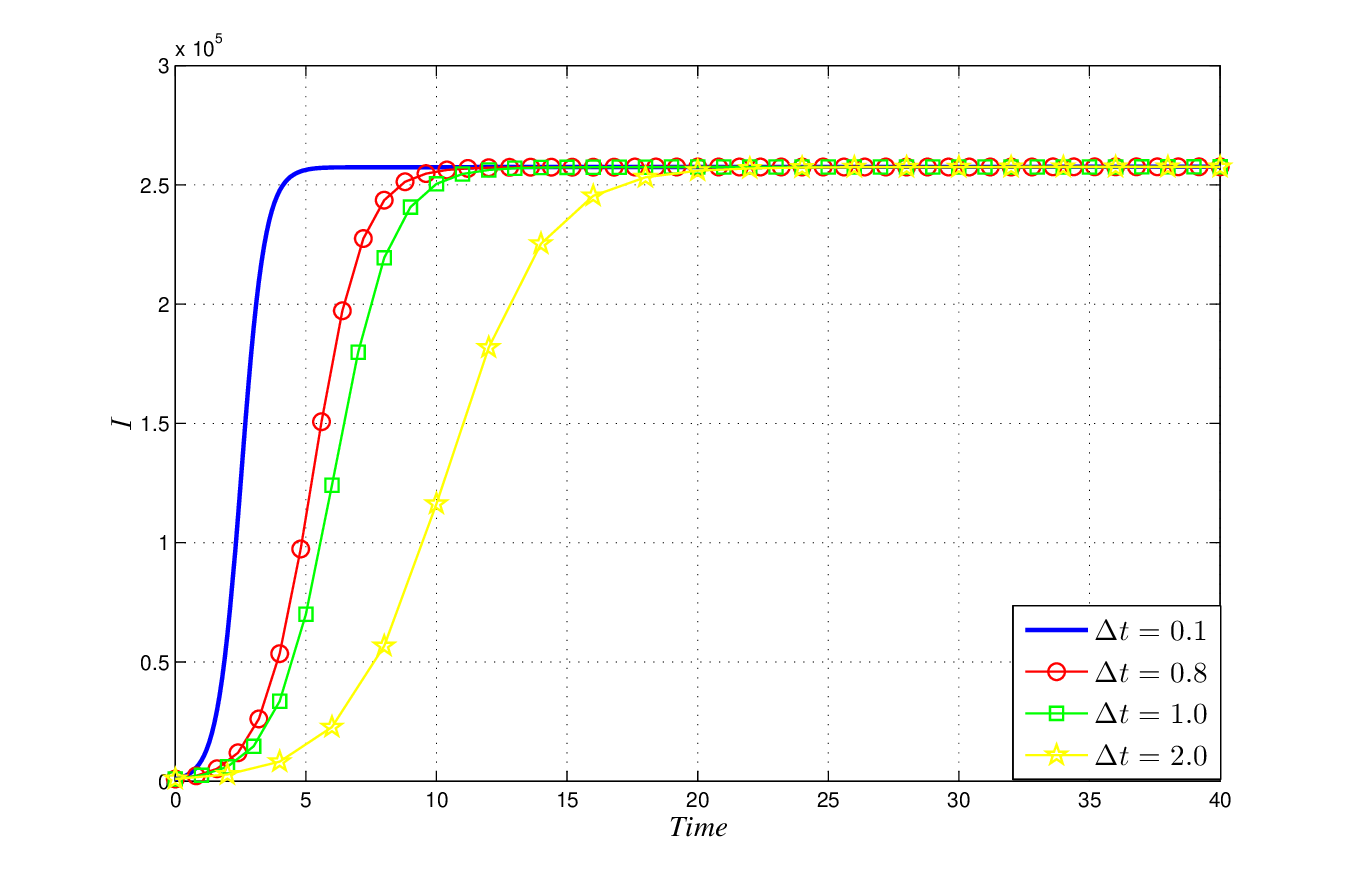}
\label{Figure:3b}
}\hfill
\caption{The numerical approximations generated by the first-order NSFD scheme with some different step sizes.}\label{Fig:3}
\end{figure}
\begin{figure}[H]
\subfloat[$S$-component]{%
\includegraphics[height=10cm,width=15cm]{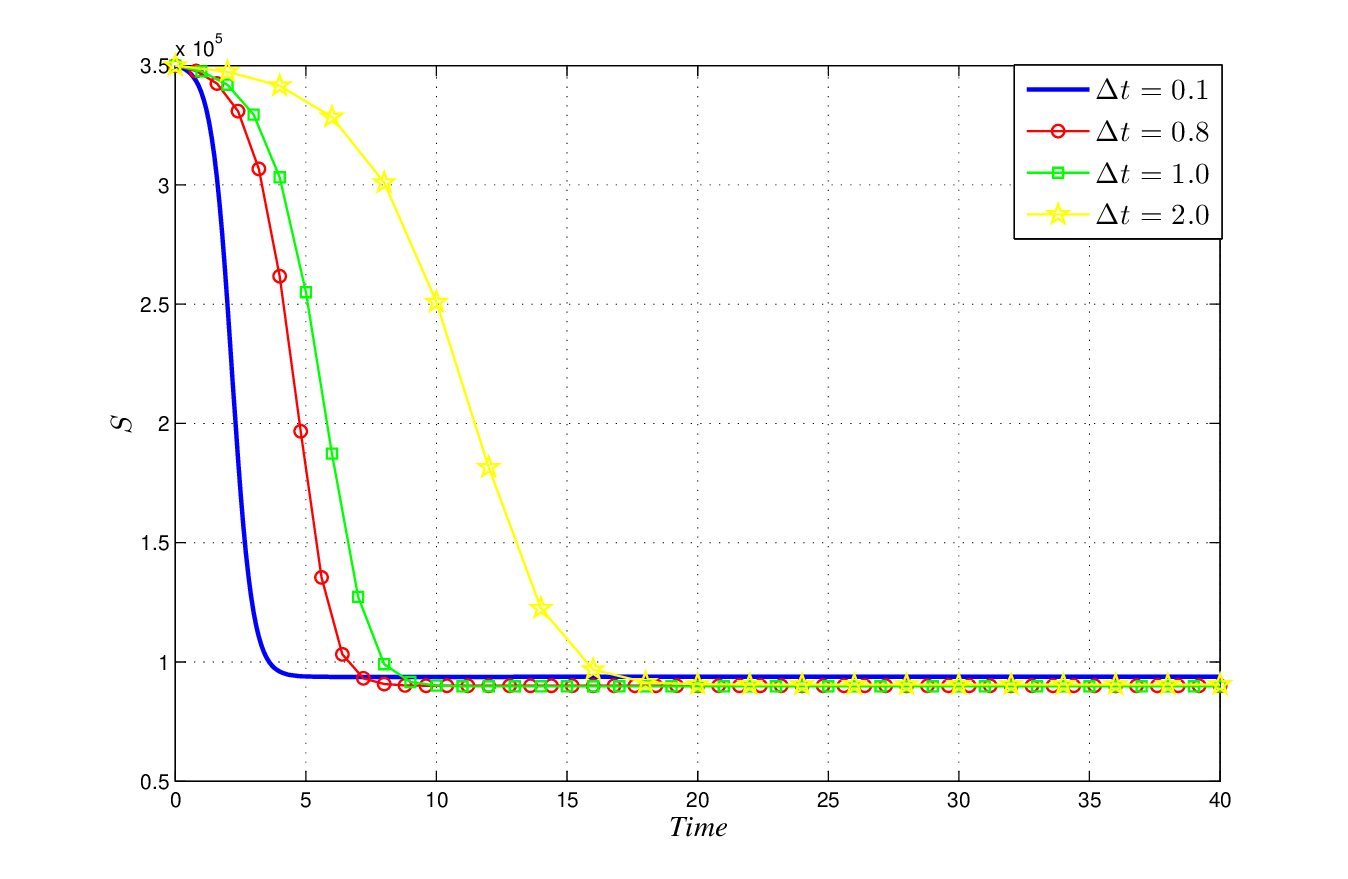}
\label{Figure:4a}
}\hfill
\subfloat[$I$-component]{%
\includegraphics[height=10cm,width=15cm]{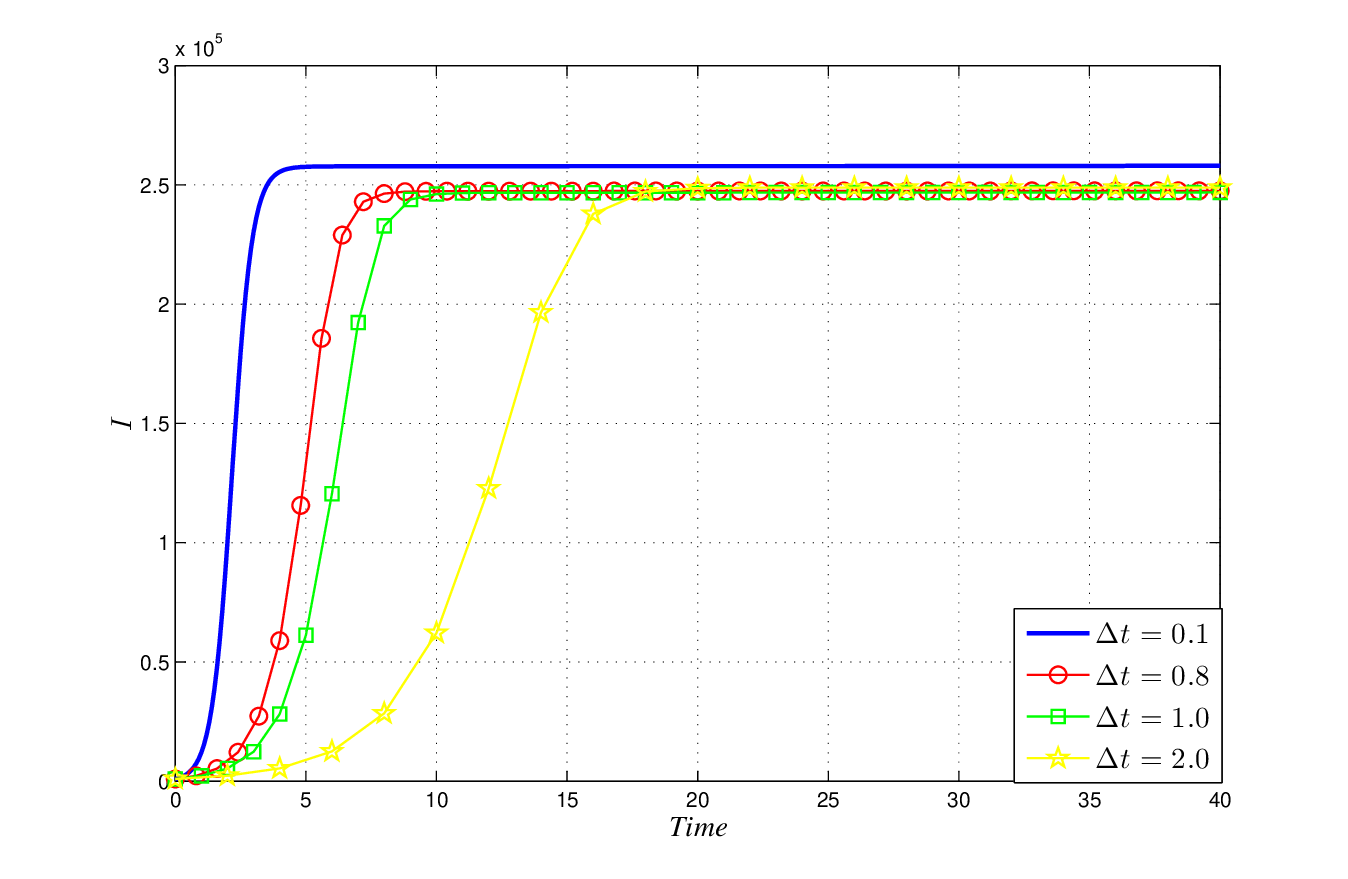}
\label{Figure:4b}
}\hfill
\caption{The numerical approximations generated by the second-order NSFD scheme with some different step sizes.}\label{Fig:4}
\end{figure}
\end{example}
\begin{example}[Global dynamics of the SIS model]
In this example, we use the proposed NSFD schemes with a small step size of
$\Delta t=10^{-4}$ to simulate the dynamics of the continuous-time SIS model over
the interval $[0,\,200]$ and thereby illustrate the global asymptotic stability results
established in Section \ref{Sec2}. To this end, we consider the parameter sets listed
in Table \ref{Table4}.
\begin{table}[H]
\centering
\caption{Parameter sets for the cases $\mathcal{R}_0<1$ and $\mathcal{R}_0>1$.}
\label{Table4}
\begin{tabular}{ccccccccc}
\hline
Set
& $\Lambda$ & $\mu$ & $\gamma$ & $\delta$
& $b$ & $\beta$ & $\mathcal{R}_0$
& Stable equilibrium\\
\hline
$1$
& $100$
& $0.02$
& $0.2$
& $0.025$
& $0.5$
& $0.1$
& $0.79$
& $E_0=(5000,\,0)$\\
$2$
& $100$
& $0.02$
& $0.2$
& $0.025$
& $0.5$
& $0.2$
& $1.59$
& $E_*=(2173.7777,\, 1256.099)$\\
\hline
\end{tabular}%
\end{table}
The numerical approximations generated by the first- and second-order NSFD schemes are depicted in Figures \ref{Fig:5}--\ref{Fig:8}. In these figures, each curve represents a numerical solution of the SIS model in the phase plane corresponding to a given initial condition, the arrows indicate the direction of evolution of the solution, and the red circle marks the globally asymptotically stable equilibrium points. It is evident that the DFE is globally asymptotically stable when $\mathcal{R}_0<1$, whereas the DEE is globally asymptotically stable when $\mathcal{R}_0>1$. These numerical observations are consistent with the theoretical results established in Section \ref{Sec2}.
\begin{figure}[H]
\subfloat[$\mathcal{R}_0 < 1$]{%
\includegraphics[height=10cm,width=15cm]{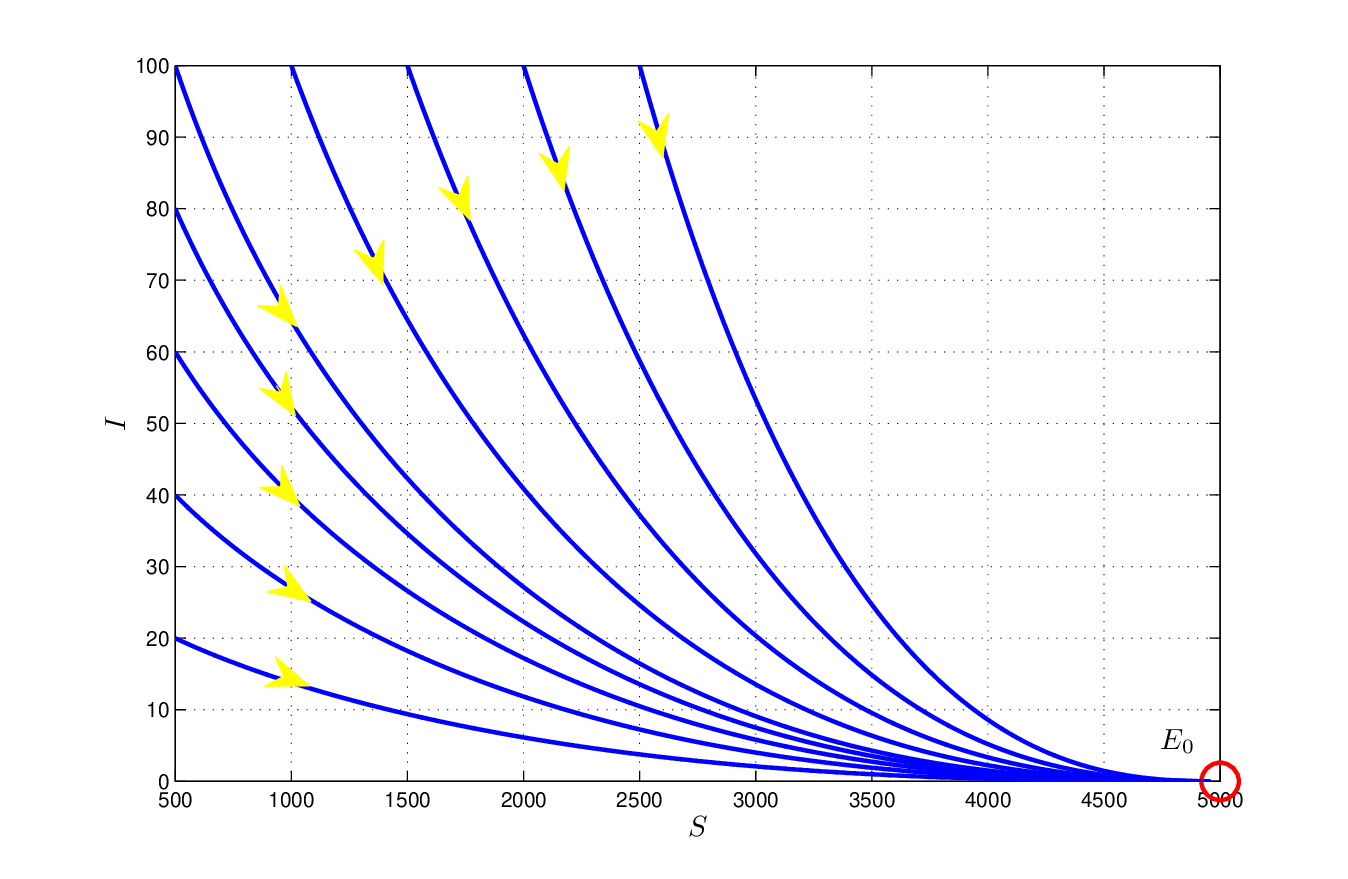}
\label{Figure:5a}
}\hfill
\subfloat[$\mathcal{R}_0 > 1$]{%
\includegraphics[height=10cm,width=15cm]{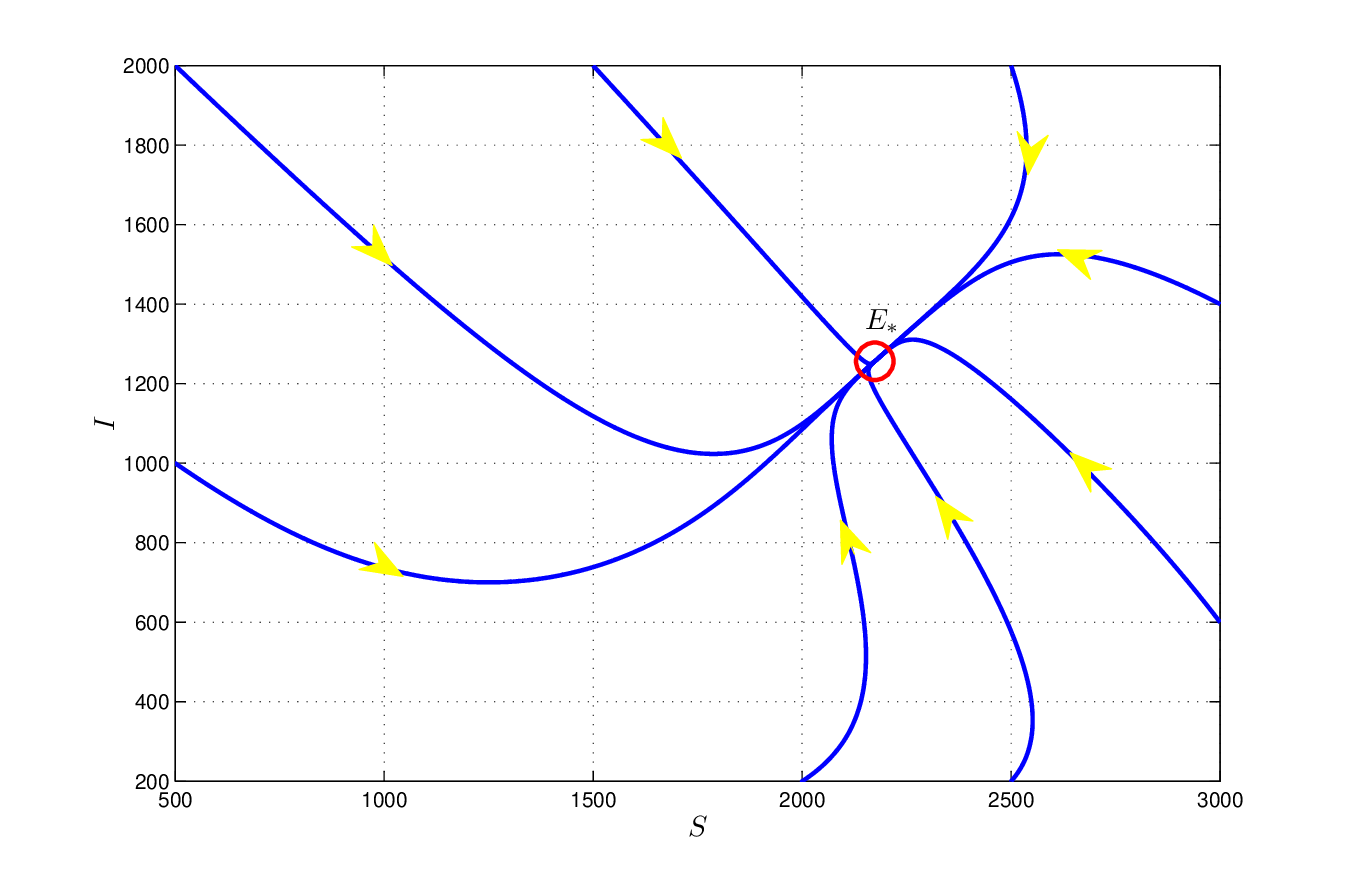}
\label{Figure:7b}
}\hfill
\caption{The numerical solutions generated by the first-order NSFD scheme.}\label{Fig:5}
\end{figure}

\begin{figure}[H]
\subfloat[$\mathcal{R}_0 < 1$]{%
\includegraphics[height=10cm,width=15cm]{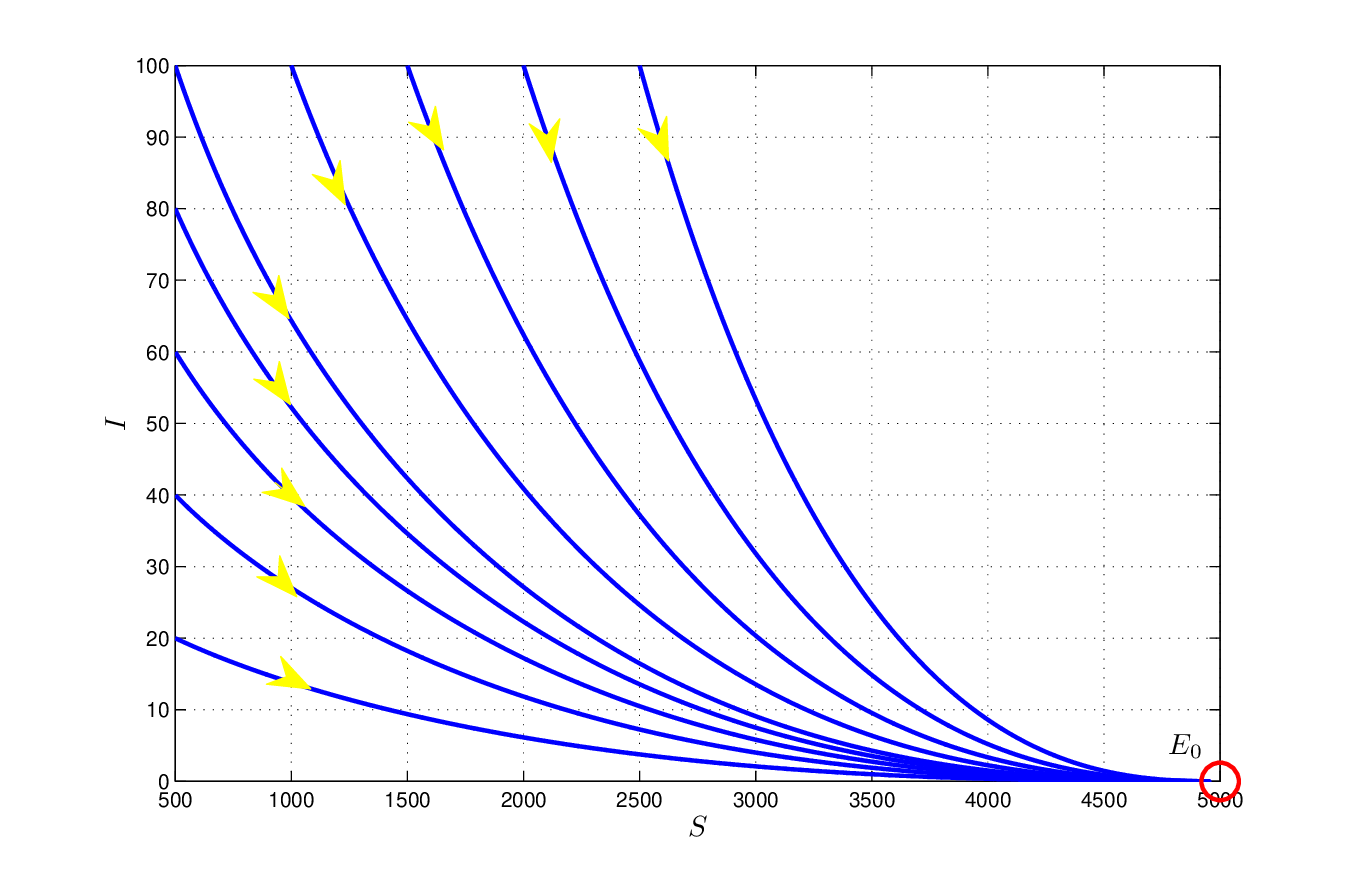}
\label{Figure:6a}
}\hfill
\subfloat[$\mathcal{R}_0 > 1$]{%
\includegraphics[height=10cm,width=15cm]{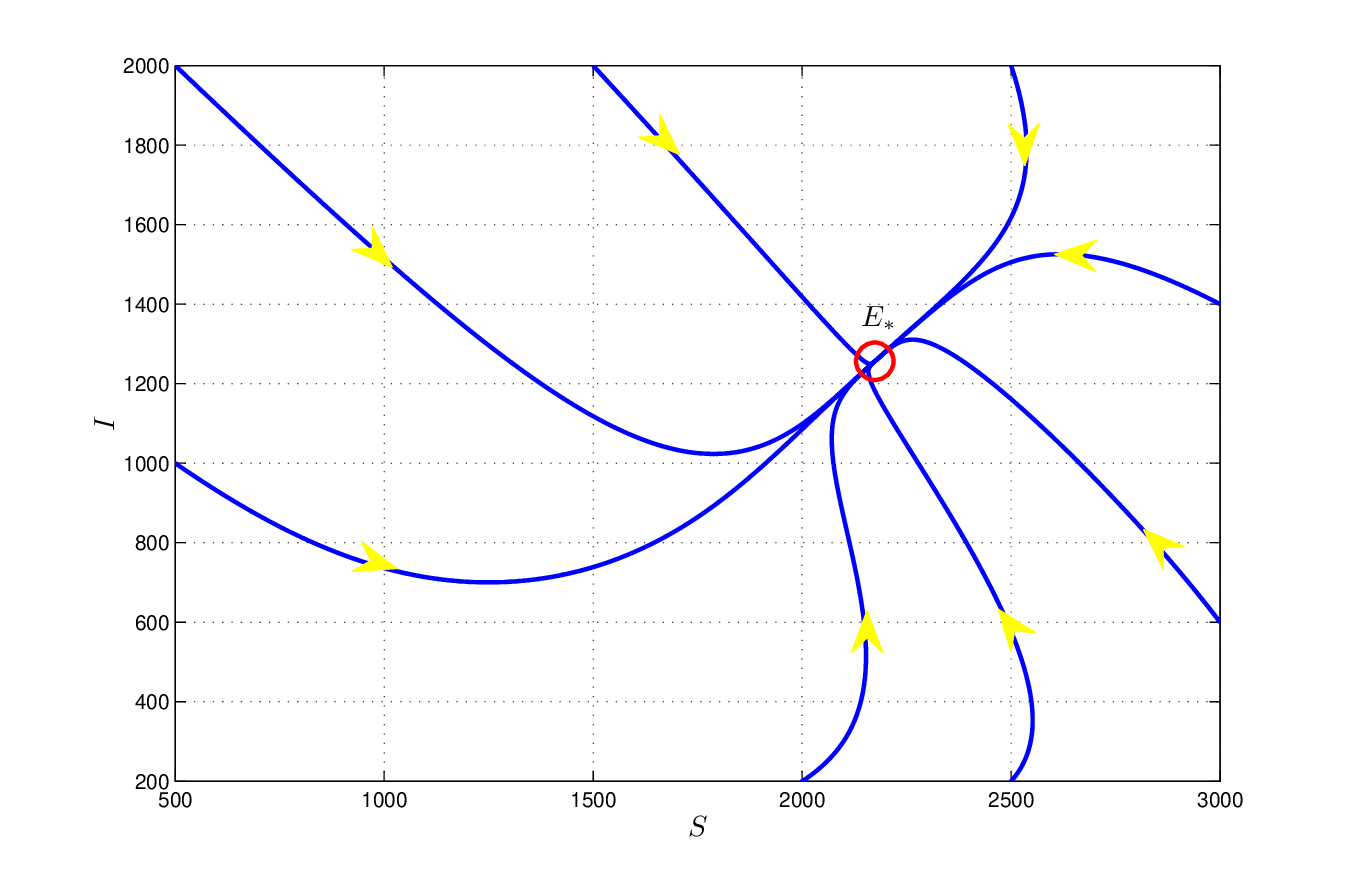}
\label{Figure:8b}
}\hfill
\caption{The numerical solutions generated by the second-order NSFD scheme.}\label{Fig:8}
\end{figure}
\end{example}
\section{Conclusions and Discussions}\label{Sec5}
In this work, we have provided a rigorous mathematical analysis of a well-known SIS epidemic model with a saturating contact rate. First, we have established the global asymptotic stability of the equilibria by employing a suitable Lyapunov function together with the Poincar\'e--Bendixson theorem and the Bendixson--Dulac criterion. The GAS results obtained in this work have improved upon previous findings for this SIS model and may provide a basis for extensions to models with more general saturating contact rates.

Second, we have constructed families of first- and second-order NSFD schemes that preserve the positivity and asymptotic stability properties of the SIS model for arbitrary step sizes. All these schemes are formulated within Mickens' framework; however, compared with their first-order counterparts, the second-order schemes employ a more elaborate construction that combines a weighted nonlocal approximation of the right-hand side with suitably renormalized denominator functions. The weighted nonlocal approximation guarantees dynamic consistency, whereas the denominator functions ensure second-order convergence.

Finally, we have conducted numerical experiments to validate the theoretical results and demonstrate the advantages of the proposed second-order NSFD schemes. The numerical results have shown good agreement with the theoretical predictions. Overall, the approach developed in this work has demonstrated its potential applicability not only to other epidemiological systems but also, more generally, to mathematical models arising from real-world applications.

In future work, we plan to extend the present approach to other epidemiological models. We also intend to develop efficient higher-order numerical methods that preserve the essential dynamical properties of the continuous--time systems under consideration.

\end{document}